\documentclass[english, a4paper]{article}

\RequirePackage[a4paper, heightrounded, text={14cm,22.2cm}, ignoreall]{geometry}

\bibliographystyle{plainurl}

\usepackage{amsmath}
\usepackage{amsthm}
\usepackage{amssymb}
\usepackage{authblk}
\usepackage{hyperref}
\usepackage{microtype}
\usepackage{url} 
\usepackage{subcaption}
\usepackage[indent, skip=0.25\baselineskip plus 2pt]{parskip}

\newtheorem{observation}{Observation}
\newtheorem{conjecture}{Conjecture}
\newtheorem{lemma}{Lemma}
\newtheorem{theorem}{Theorem}
\newtheorem{proposition}{Proposition}

\usepackage{todonotes}

\title{Coloring problems on arrangements of pseudolines\footnote{
	S.~Roch was funded by the DFG-Research Training Group 'Facets of Complexity' (DFG-GRK~2434). Special thanks to Rimma Hämäläinen for the stimulating discussions on this topic.
}}

\author{Sandro Roch}
\affil{Technische Universität Berlin\\
  \texttt{roch@math.tu-berlin.de}}

\begin{document}

\maketitle

\begin{abstract}
  Arrangements of pseudolines are a widely studied generalization of line arrangements. They are defined as a finite family of infinite curves in the Euclidean plane, any two of which intersect at exactly one point. One can state various related coloring problems depending on the number $n$ of pseudolines. In this article, we show that $n$ colors are sufficient for coloring the crossings avoiding twice the same color on the boundary of any cell, or, alternatively, avoiding twice the same color along any pseudoline. We also study the problem of coloring \mbox{the pseudolines avoiding monochromatic crossings}.
\end{abstract}

\section{Introduction}

An \textit{arrangement of pseudolines} or \textit{pseudoline arrangement} is a finite family of simple continuous curves $f_1, \cdots, f_n: \mathbb{R}\to\mathbb{R}^2$ in the Euclidean plane with \[\lim_{t\to\infty}\lVert f_i(t)\rVert = \lim_{t\to-\infty}\lVert f_i(t)\rVert = \infty,\] and the property that each pair $f_i, f_j$, $i\neq j$ crosses in exactly one point. A pseudoline arrangement is \textit{simple}, if at most two pseudolines cross in a single point, see Figure~\ref{fig:example_arrangement:nonsimple} and Figure~\ref{fig:example_arrangement:simple} for examples of a non-simple and a simple arrangement of $6$ pseudolines.

\begin{figure}[tbh]
    \centering
    \begin{subfigure}[b]{0.32\textwidth}
        \centering
        \includegraphics[page=1]{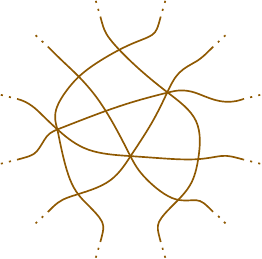}
        \caption{}
        \label{fig:example_arrangement:nonsimple}
    \end{subfigure}
    \hfill
    \begin{subfigure}[b]{0.32\textwidth}
        \centering
        \includegraphics[page=2]{figures/arrangement_and_tiling.pdf}
	\caption{}
        \label{fig:example_arrangement:simple}
    \end{subfigure}
    \hfill
    \begin{subfigure}[b]{0.32\textwidth}
        \centering
        \includegraphics[page=3]{figures/arrangement_and_tiling.pdf}
	\caption{}
        \label{fig:example_arrangement:correspondence}
    \end{subfigure}
	
    \caption{A non-simple (a) and a simple (b) pseudoline arrangement together with a corresponding tiling (c).}
    \label{fig:example_arrangement}
\end{figure}

Pseudoline arrangements are widely studied objects. They were first described in 1926 by Levi~\cite{levi26} and were further studied by Ringel~\cite{ringel57} and Grünbaum \cite{gruenbuam72}. Every line arrangement is also a pseudoline arrangement. On the other hand, there exist arrangements of at least~$n\geq 8$ pseudolines that cannot be ``strechted'', i.e. they are not isomorphic to any line arrangement, see~\cite{ringel57} and~\cite{goodmanPollack80}. But pseudoline arrangements are not only a generalization of line arrangements: Isomorphism classes of simple arrangements are in correspondence with a rich variety of other objects, such as rhombic tilings of~$2$-dimensional zonotopes (indicated in Figure~\ref{fig:example_arrangement:correspondence}), classes of reduced words of permutations and oriented matroids of rank~$3$. For a general introduction to pseudoline arrangements we refer to \cite{felsnerGoodman17}, \cite{felsnerWeil01} and \cite[ch. 6]{bjoernerEtAl99}.

\subsection{Related work}

In 2006, Felsner, Hurtado, Noy and Streinu~\cite{felsnerHurtadoNoyStreinu06} studied the \textit{arrangement graph}~$G_\mathcal{A}$ of a pseudoline arrangement~$\mathcal{A}$, which consists of the crossings in $\mathcal{A}$ as vertices and its edges are formed by the arcs between them. They give a short argument that $G_\mathcal{A}$ can be colored using three colors if $\mathcal{A}$ is simple. As $G_\mathcal{A}$ is planar, it is clearly $4$-colorable, including for non-simple arrangements. In~\cite{chiuErAl23} one can find an infinite family of line arrangements that require $4$ colors.

In 2013, Bose et al.~\cite{boseEtAl13} introduced further coloring problems on line arrangements. An arrangement decomposes the Euclidean plane into \textit{cells}: The example in Figure~\ref{fig:example_arrangement:nonsimple} consists of~$7$ \textit{bounded cells} and~$12$ \textit{unbounded cells}. One of the most remarkable results in~\cite{boseEtAl13} states that coloring the lines of a simple arrangement of $n$ lines avoiding cells whose bounding lines have all the same color requires at most $\mathcal{O}(\sqrt{n})$ colors. This was improved to $\mathcal{O}(\sqrt{n / \log n})$ by Ackerman, Pach, Pinchasi, Radoi\v{c}i\'{c} and T\'{o}th~\cite{ackermanEtAl14}, extending it also to non-simple line arrangements. Finding line arrangements that require many colors in such a coloring seems to be a difficult task; in~\cite{boseEtAl13} they provide a construction that requires $\Omega\left(\log / n\log \log n\right)$ colors.

\subsection{Results}

In \cite{boseEtAl13} and \cite{ackermanEtAl14}, the language of hypergraph coloring serves as a common formalization of the different coloring concepts and allows for the use of results from this field. If~$\mathcal{H}=(V, \mathcal{E})$ is a hypergraph, a \textit{vertex coloring} of~$\mathcal{H}$ is a coloring of the vertices avoiding \textit{monochromatic edges}, i.e. hyperedges whose contained vertices are assigned all the same color, while an \textit{edge coloring} of~$\mathcal{H}$ is a coloring of the hyperedges with no vertex being incident to two edges of the same color. The \textit{(vertex) chromatic number}~$\chi(\mathcal{H})$ is the minimal number of colors of a vertex coloring, while the \textit{edge chromatic number}~$\chi'(\mathcal{H})$ is the minimal number of colors of an edge coloring. Note that vertex coloring is not equivalent to edge coloring of the hypergraph dual.

Our results can all be stated in terms of two hypergraphs: The vertices of~$\mathcal{H}_{\text{cell-vertex}}(\mathcal{A})$ are the (bounded and unbounded) cells of~$\mathcal{A}$, and each crossing~$c$ defines a hyperedge consisting of the cells that contain $c$ on their boundary. At the same time, the hypergraph~$\mathcal{H}_{\text{line-vertex}}(\mathcal{A})$ is defined on the set of $n$ pseudolines as vertices and each crossing in~$\mathcal{A}$ defines a hyperedge consisting of the pseudolines involved in~$c$. Section~\ref{sec:coloring_crossings} is devoted to problems in which the crossings are colored. We show~$\chi'(\mathcal{H}_{\text{cell-vertex}})\leq n$ for every pseudoline arrangement:

\begin{theorem}\label{thm:crossing_coloring_face_respecting}
    Let $\mathcal{A}$ be an arrangement of $n$ pseudolines. The crossings of $\mathcal{A}$ can be colored using $n$ colors so that no color appears twice on the boundary of any cell.
\end{theorem}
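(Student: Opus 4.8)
The plan is to work in the wiring-diagram model. By the classical normalization theorem one may assume the pseudolines $f_1,\dots,f_n$ are all $x$-monotone and, after relabelling, that at $x=-\infty$ they appear in the order $1,2,\dots,n$ from bottom to top (hence in reverse order at $x=+\infty$). Sweeping a vertical line from left to right, the bottom-to-top order changes only at crossings, where the pseudolines meeting in that crossing — they occupy a consecutive block of \emph{levels} $k,k+1,\dots,k+m-1$ just before it — reverse their order. A small observation I would record: the \emph{labels} of the pseudolines through a crossing, listed by the level they occupy just before it, are strictly increasing, so it is meaningful to speak of the smallest label $\lambda_{\min}$ and the largest label $\lambda_{\max}$ among them. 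I would then define the colouring by: colour a crossing whose lowest level is $k$ with $\lambda_{\min}$ if $k$ is even and with $\lambda_{\max}$ if $k$ is odd. Only the $n$ labels occur, so this is an $n$-colouring; it remains to check that no cell sees a colour twice.

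The verification for \emph{simple} arrangements rests on the following structure of a cell $F$, which I would establish first. Throughout its horizontal extent $F$ lives at one fixed level-gap $g$; going around $\partial F$ the edges lie on pairwise distinct pseudolines, and each crossing on $\partial F$ is flanked by exactly the two pseudolines carrying its two incident edges. Its leftmost and rightmost vertices $c_0,c_R$ are precisely the crossings on $\partial F$ whose level block contains both $g$ and $g+1$, while every other crossing on $\partial F$ touches the bottom boundary (level block with top $=g$) or the top boundary (level block with bottom $=g+1$). For a \emph{bounded} $F$ the only ways to pick, for every crossing of $\partial F$, one of its two flanking pseudolines so that the chosen colours are all distinct are the two "rotational" systems of distinct representatives of the bounding cycle; the point is that the parity rule realises one of these. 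Indeed, a simple crossing at $c_0$ or $c_R$ has level block $\{g,g+1\}$, so it picks according to the parity of $g$, while a simple bottom/top crossing has block $\{g-1,g\}$ or $\{g+1,g+2\}$, so it picks according to the parity of $g-1=g+1$ — and a short case check shows these choices are consistently one of the two rotational representatives (even when $F$ has fewer than $n$ crossings). Unbounded cells satisfy the analogous, weaker condition and are handled the same way, so every cell of a simple arrangement is rainbow.

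The main obstacle — and, I expect, the real work — is the general non-simple case. The clean recipe breaks because a multiple point $p$ with level block $\{k,\dots,k+m-1\}$, $m\ge 3$, is simultaneously a left/right vertex of the cells at gaps $k,\dots,k+m-2$, and there the parity of $k$ need not agree with the parity of $g$; worse, the flanking pairs of $p$ seen by the $2m$ surrounding cells run through a whole $m$-cycle of pairs, so no rule "colour $p$ by one of its flanking pseudolines" can be consistent with all of them. A naive perturbation does not close the gap either: after perturbing $p$ into a cluster of $\binom{m}{2}$ simple crossings and colouring the resulting simple arrangement, the cluster-crossings visible from the $2m$ faces around $p$ generically carry $m$ distinct colours, so one cannot simply read off a colour for $p$. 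My proposed route is therefore (i) to replace the crude "parity of $k$" by a finer $\pm1$-valued invariant $h$ on crossings that is still constant on the side-crossings of every cell, opposite-constant on its two corner crossings, and (ii) failing a clean such $h$, to argue by induction on $n$: delete a carefully chosen pseudoline $\ell$, colour the rest with $n-1$ colours, and extend the colouring along the path of new crossings that $\ell$ creates — here the crux is to pick $\ell$ so that the two old cells flanking each new crossing never jointly use all $n-1$ old colours, which leaves each new crossing a list of size $\ge 2$ and hence makes the path list-colourable.
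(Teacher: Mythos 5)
Your proposal does not prove the theorem as stated: it only handles simple arrangements, and the theorem (and the paper's emphasis) is precisely about general, possibly non-simple arrangements. Your explicit rule (colour a crossing by $\lambda_{\min}$ or $\lambda_{\max}$ according to the parity of its lowest level) does work in the simple case --- I checked that in both parities every crossing of a cell at gap $g$ ends up selecting the same ``rotational'' neighbour of the boundary cycle, so Lemma~\ref{lemma:pseudoline_face_incidence} makes the chosen colours distinct --- but you yourself identify correctly why it collapses at a multiple point $p$ of degree $m\geq 3$: $p$ is a corner of $m-1$ stacked cells of different gap parities, and the flanking pairs seen by the $2m$ incident cells exhaust an $m$-cycle of pairs, so no single label attached to $p$ can serve all of them. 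Neither of your two repair strategies is carried out: (i) no candidate invariant $h$ is exhibited, and (ii) the deletion/reinsertion induction is only a wish --- reinserting $\ell$ both splits old cells and absorbs old crossings into higher-degree ones, and you give no argument that a pseudoline with the required ``list of size $\geq 2$ at every new crossing'' property exists. So the heart of the theorem is missing.

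For comparison, the paper does not attempt an explicit formula at all. It orients the arrangement graph $G_\mathcal{A}$ by fixing a north cell, proves this orientation is acyclic (Lemma~\ref{lemma:acyclic}), and colours the crossings greedily along a topological order. The whole weight is carried by a counting lemma (Lemma~\ref{lemma:bound_conflict_ancestors}): a crossing $c$ of degree $k$ has at most $n-1$ already-coloured crossings sharing a cell with it, proved by deleting the $k$ pseudolines through $c$, bounding the boundary crossings of the resulting cell via Lemma~\ref{lemma:pseudoline_face_incidence}, and then discarding the neighbours of $c$ that must come \emph{after} it in every topological order. This treats multiple points uniformly --- exactly the case your construction cannot reach --- at the price of giving a greedy procedure rather than a closed-form colouring. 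If you want to pursue your route, the honest target would be to prove your rule only for simple arrangements and then find a degeneration argument that genuinely transfers the property to multiple points; as written, that step is absent.
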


\begin{figure}
    \centering
    \includegraphics{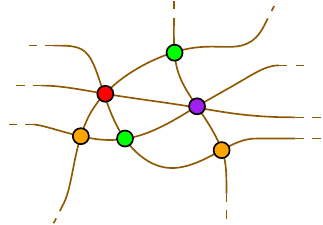}
    \caption{Coloring that fulfills the statements of Theorem~\ref{thm:crossing_coloring_face_respecting} and Theorem~\ref{thm:crossing_coloring} simultaneously.}
    \label{fig:example_crossing_coloring}
\end{figure}

 The abovementioned results in \cite{boseEtAl13} and \cite{ackermanEtAl14} are bounds on the chromatic number of a hypergraph~$\mathcal{H}_{\text{line-cell}}$ restricted to the case of line arrangements. However, none of the coloring problems that are discussed in \cite{boseEtAl13} relates lines with crossings. This is done in the following two theorems, the first one of which shows $\chi'(\mathcal{H}_{\text{line-vertex}})\leq n$:

\begin{theorem}\label{thm:crossing_coloring}
    Let $\mathcal{A}$ be an arrangement of $n$ pseudolines. The crossings of $\mathcal{A}$ can be colored using $n$ colors so that no color appears twice along any pseudoline.
\end{theorem}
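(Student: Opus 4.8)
The plan is to translate the statement into a graph–colouring problem and then induct on $n$. A colouring of the crossings in which no colour repeats along any pseudoline is precisely a proper vertex colouring of the \emph{conflict graph} $G=G(\mathcal A)$ whose vertices are the crossings of $\mathcal A$, two crossings being adjacent when some pseudoline runs through both. Any two distinct crossings share at most one pseudoline --- otherwise that pair of pseudolines would meet twice --- so $G$ is the edge-disjoint union of the $n$ cliques $Q_f=\{\text{crossings on }f\}$, one per pseudoline $f$; here $|Q_f|=r_f\le n-1$ and $Q_f\cap Q_{f'}$ is the single crossing of $f$ and $f'$. Thus $\chi'(\mathcal H_{\text{line-vertex}})=\chi(G)$, and the target $n$ is exactly the bound predicted by the Erd\H{o}s--Faber--Lov\'asz configuration formed by $Q_{f_1},\dots,Q_{f_n}$.

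As a warm-up, when $\mathcal A$ is simple every crossing is a distinct pair of pseudolines and every pair occurs exactly once, so $G$ is the line graph of $K_n$ and the statement reduces to $\chi'(K_n)\le n$. This is the classical round-robin colouring: label the pseudolines by $0,\dots,n-1$ and colour the crossing of $f_i$ and $f_j$ with $i+j \bmod n$; at every pseudoline the $n-1$ colours so obtained are distinct.

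For a general arrangement I would argue by induction on $n$. Sweeping from $x=-\infty$, let $g$ be the pseudoline starting at the lowest level (hence ending at the highest). Since a pseudoline that has gone below $g$ has already crossed $g$ and cannot cross it again, $g$ is the lowest pseudoline at each of its own crossings, its level strictly increases along the sweep, and the pseudoline sets of its crossings are $\{g\}\cup B_1,\dots,\{g\}\cup B_r$, where $B_1,\dots,B_r$ partition the other $n-1$ pseudolines and each $B_t$ with $|B_t|\ge 2$ is the pseudoline set of a crossing of $\mathcal A-g$ --- and those crossings of $\mathcal A-g$ are pairwise disjoint. Colour $\mathcal A-g$ by the induction hypothesis, then extend to the $r\le n-1$ crossings through $g$: a $B_t$ with $|B_t|=1$ gives a brand-new crossing whose colour need only avoid the colours already present on the one pseudoline in $B_t$, while for $|B_t|\ge 2$ we may freely recolour the corresponding crossing of $\mathcal A-g$. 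Picking the $r$ new colours then becomes a system-of-distinct-representatives problem: the colour $c_t$ of $\{g\}\cup B_t$ may be taken from the colours missing on all pseudolines of $B_t$ (together with the former colour of that crossing when $|B_t|\ge 2$), and the $c_t$ must be pairwise distinct so that $g$ ends up rainbow.

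The heart of the proof --- and the step I expect to be the real obstacle --- is making this extension work, since the plain greedy bound fails: one pseudoline may already carry $n-2$ colours, so the forbidden sets can have size $\Theta(n)$ and no ordering obviously controls them. Getting the SDR to exist requires strengthening the induction hypothesis to control how the colours are spread out over the pseudolines --- the natural candidate being that $\mathcal A-g$ can be coloured so that its pseudolines admit a system of distinct \emph{missing} colours (equivalently, for every set $I$ of pseudolines at most $n-|I|$ colours are used by all of them) --- and then checking that this property survives the reinsertion of $g$, using the disjointness and the nested structure of the blocks $B_t$ along the sweep. If one is willing to quote it, the Erd\H{o}s--Faber--Lov\'asz theorem applied to the cliques $Q_f$ gives $\chi(G)\le n$ directly.
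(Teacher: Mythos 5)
Your fallback---observing that the graph on crossings (two adjacent when they share a pseudoline) is the edge-disjoint union of $n$ cliques $Q_{f_1},\dots,Q_{f_n}$ pairwise meeting in at most one vertex, equivalently that $\mathcal{H}_{\text{line-vertex}}$ is a simple hypergraph on $n$ vertices, and invoking the Erd\H{o}s--Faber--Lov\'asz theorem of Kang et al.---is exactly the paper's proof, so the proposal is correct. Your attempted sweep-and-reinsert induction is, as you yourself note, incomplete at the system-of-distinct-representatives step, and the paper's author likewise remarks that no elementary proof or simple deterministic colouring procedure is known for the non-simple case, so you were right to treat that route as an unresolved obstacle rather than as the proof.
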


Figure~\ref{fig:example_crossing_coloring} shows an example of a coloring as guaranteed in Theorem~\ref{thm:crossing_coloring_face_respecting} and in Theorem~\ref{thm:crossing_coloring}. In Section~\ref{sec:coloring_pseudolines}, we study the number of colors required to color the pseudolines avoiding monochromatic crossings. In addition to several minor results, we prove:

\begin{theorem}\label{thm:pseudoline_coloring_degree_at_least_four}
    Let $\mathcal{A}$ be an arrangement of $n$ pseudolines. The pseudolines of $\mathcal{A}$ can be colored using $\mathcal{O}(\sqrt{n})$ colors avoiding monochromatic crossings of degree at least $4$.
\end{theorem}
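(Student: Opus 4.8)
The plan is to split the degree-$\ge 4$ crossings into those of ``large'' degree and those of ``small'' degree at the threshold $\sqrt n$, to handle each class by a separate random colouring, and to glue the two colourings together by a product construction. Concretely, fix $k:=\lceil C\sqrt n\rceil$ for a suitable absolute constant $C$ and assign to each pseudoline $\ell$ a colour $(\varphi_1(\ell),\varphi_2(\ell))\in\{1,2\}\times\{1,\dots,k\}$: the coordinate $\varphi_1$ will already separate every crossing of degree $>\sqrt n$, and $\varphi_2$ will separate every crossing of degree in $\{4,\dots,\lfloor\sqrt n\rfloor\}$. The total number of colours is then $2k=\mathcal{O}(\sqrt n)$, and a crossing of degree $\ge 4$ can be monochromatic under $(\varphi_1,\varphi_2)$ only if it is monochromatic in the coordinate responsible for it, which we exclude.

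Two structural facts drive the argument. First, two distinct crossings share at most one pseudoline, since two pseudolines meeting in two points would contradict the arrangement axiom. Second, every pseudoline lies on at most $(n-1)/3$ crossings of degree $\ge 4$: along a fixed pseudoline $\ell$, such a crossing $c$ ``uses up'' the $d_c-1\ge 3$ further pseudolines through it, and by the first fact different crossings on $\ell$ use disjoint sets of further pseudolines, of which there are only $n-1$. Summing $\binom{d_c}{2}$ over all crossings of $\mathcal A$ yields exactly $\binom n2$ (each pair of pseudolines crosses once), so the number of crossings of degree $>\sqrt n$ is at most $\binom n2\big/\binom{\lceil\sqrt n\rceil}{2}=\mathcal{O}(n)$.

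For $\varphi_1$, colour the pseudolines independently and uniformly in $\{1,2\}$. A fixed crossing of degree $d>\sqrt n$ is monochromatic with probability $2^{1-d}\le 2^{1-\sqrt n}$, and there are only $\mathcal{O}(n)$ such crossings, so a union bound produces a colouring $\varphi_1$ that separates all of them. For $\varphi_2$, colour the pseudolines independently and uniformly in $\{1,\dots,k\}$, and for each crossing $c$ with $d_c\in\{4,\dots,\lfloor\sqrt n\rfloor\}$ let $A_c$ be the event that all $d_c$ pseudolines through $c$ receive the same $\varphi_2$-colour, so $\Pr[A_c]=k^{1-d_c}\le k^{-3}$. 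The event $A_c$ depends only on the colours of the pseudolines through $c$, hence is mutually independent of all $A_{c'}$ for which $c'$ shares no pseudoline with $c$; by the two structural facts, the number of $c'$ that \emph{do} share a pseudoline with $c$ is at most $d_c\cdot(n-1)/3\le \sqrt n\,(n-1)/3$. The symmetric Lovász Local Lemma therefore applies as soon as $e\cdot k^{-3}\cdot\big(\sqrt n\,(n-1)/3+1\big)\le 1$, which holds for $k=\lceil C\sqrt n\rceil$ once $C$ is chosen large enough (a constant slightly above $(e/3)^{1/3}$), so a colouring $\varphi_2$ avoiding every $A_c$ exists.

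Finally $\varphi:=(\varphi_1,\varphi_2)$ uses $2k=\mathcal{O}(\sqrt n)$ colours; a crossing of degree $>\sqrt n$ is non-monochromatic already in the $\varphi_1$-coordinate, and a crossing of degree in $\{4,\dots,\lfloor\sqrt n\rfloor\}$ is non-monochromatic already in the $\varphi_2$-coordinate, covering all crossings of degree $\ge 4$. The step I expect to be delicate is the dependency estimate feeding the Local Lemma: everything rests on the bound $d_c\cdot(n-1)/3$ on the number of degree-$\ge 4$ crossings meeting a given one, and on placing the threshold exactly at $\sqrt n$, which is the value at which both the union-bound part and the Local-Lemma part cost $\mathcal{O}(\sqrt n)$ colours. (Lowering the threshold and using an asymmetric version of the Local Lemma would in fact yield a bound below $\sqrt n$, but $\mathcal{O}(\sqrt n)$ is all that is claimed here.)
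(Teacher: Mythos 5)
Your proof is correct. The core of your argument coincides with the paper's: both split the crossings at the threshold $\sqrt n$ and both dispose of the crossings of degree in $\{4,\dots,\lfloor\sqrt n\rfloor\}$ by a uniformly random colouring with $\Theta(\sqrt n)$ colours and the symmetric Lov\'asz Local Lemma, using exactly your two structural facts (two crossings share at most one pseudoline, hence each pseudoline carries at most $(n-1)/3$ crossings of degree at least $4$, giving dependency degree at most $\sqrt n\,(n-1)/3$). Where you genuinely diverge is the treatment of the crossings of degree greater than $\sqrt n$. The paper handles them deterministically: it repeatedly deletes the bundle of pseudolines through such a crossing (fewer than $\sqrt n$ deletions, since each removes more than $\sqrt n$ pseudolines), colours the residual arrangement by the Local Lemma, and reinserts each bundle with $2$ fresh colours, paying an additive $2\sqrt n$. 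You instead keep all pseudolines, count the high-degree crossings via $\sum_c\binom{d_c}{2}=\binom n2$ to get $\mathcal{O}(n)$ of them, kill them with a random $2$-colouring and a union bound (valid for $n$ beyond a fixed constant; smaller $n$ are trivial), and combine the two colourings as a product, paying a multiplicative factor $2$. Both yield $\mathcal{O}(\sqrt n)$; your version has the mild advantage that the product construction makes the non-interaction between the two parts automatic, whereas the reinsertion step in the paper requires a short argument that crossings mixing reinserted and old pseudolines, or pseudolines from two different bundles, cannot become monochromatic. Your double-counting bound of $\mathcal{O}(n)$ on the number of high-degree crossings is an ingredient the paper does not need.
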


Here, the \textit{degree} of a crossing is the number of pseudolines that intersect in said crossing.

\section{Coloring crossings}\label{sec:coloring_crossings}

\subsection{Avoiding twice the same color on the boundary of any cell}

The following Lemma~\ref{lemma:pseudoline_face_incidence} and Lemma~\ref{lemma:acyclic} are well-known facts about pseudoline arrangements. They constitute the drawing of a pseudoline arrangement as a \textit{wiring diagram}, see~\cite{felsnerWeil01}. Assuming such a wiring diagram, our proof of Theorem~\ref{thm:crossing_coloring} will consist of a greedy coloring of the crossings from left to right. Nevertheless, we provide the simple proofs of these facts for the sake of completeness. The following Lemma~\ref{lemma:pseudoline_face_incidence} implies that a bounded (unbounded) cell contains at most $n$ ($n-1$) crossings on its boundary.

\begin{lemma}\label{lemma:pseudoline_face_incidence}
    For any cell $F$ and any pseudoline $l$ in a pseudoline arrangement, $l$ contains at most one boundary segment of $F$.
\end{lemma}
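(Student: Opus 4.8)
The plan is to argue by contradiction. Suppose some pseudoline $l$ contains two distinct boundary segments $s_1$ and $s_2$ of the same cell $F$. Each $s_i$ is a maximal arc of $l$ that lies on the boundary of $F$, so $l$ leaves the closure of $F$ strictly between them; consequently there is a portion of $l$ that is disjoint from $\overline{F}$, separating $s_1$ from $s_2$ along $l$. I would pick interior points $p_1 \in s_1$ and $p_2 \in s_2$ and join them by a simple arc $\gamma$ inside the open cell $F$ (possible since a cell of an arrangement is an open convex-like region — in fact cells of pseudoline arrangements are simply connected, which is the fact I would invoke). Together with the sub-arc of $l$ between $p_1$ and $p_2$, this $\gamma$ forms a closed curve $C$ in the plane.

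Next I would extract the contradiction from the Jordan curve theorem. The closed curve $C$ splits $\mathbb{R}^2$ into an inside and an outside. Consider the two ``ends'' of the pseudoline $l$, i.e. $f(t)$ for $t \to \pm\infty$, which go to infinity and hence both lie in the unbounded region determined by $C$. Now take any other pseudoline $l'$. Since $l'$ crosses $l$ exactly once, I would track how $l'$ interacts with $C = \gamma \cup (\text{arc of } l)$: $l'$ meets the $l$-part of $C$ at most once, and $l'$ cannot meet $\gamma$ at all because $\gamma$ lies in the interior of the cell $F$, which by definition contains no point of any pseudoline. Hence $l'$ meets $C$ at most once, so $l'$ stays on one side of $C$ except possibly for a single transversal crossing — but a line-like curve going to infinity on both ends that crosses a Jordan curve an odd number of times is impossible, and crossing it exactly once is also impossible since both ends of $l'$ are in the outer region. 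So every $l'$ lies entirely in the exterior of $C$. I would then derive a contradiction by exhibiting a point of the interior of $C$ that must be separated from infinity by the arrangement — concretely, the interior of $C$ lies in $\overline{F}$ on the $\gamma$-side and against $l$ on the other side, so the boundary of the cell $F$ can only ``turn around'' $l$ by using crossings on $l$ with other pseudolines, and the cell would have to wrap around the arc of $l$ — contradicting simple-connectivity of $F$, or directly contradicting that $F$ is locally on one side of $l$ near $l$.

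A cleaner version of the same idea, which I would probably prefer to write up: orient $l$, and note that the cell $F$ lies locally on a fixed side of $l$ at every point of $s_1$ and at every point of $s_2$ (since $F$ is connected, it is on the same side, say the left, along both). Between $s_1$ and $s_2$, the pseudoline $l$ is not on $\partial F$, so following the boundary $\partial F$ from the endpoint of $s_1$ we depart from $l$ through a crossing $c_1$, travel along other pseudolines, and return to $l$ through another crossing $c_2$ to reach $s_2$; meanwhile the ``$l$-arc'' from $s_1$ to $s_2$ together with this boundary path bounds a region $R$ lying to the left of $l$. Every crossing on this $l$-arc between $c_1$ and $c_2$ is a point where some pseudoline enters $R$ or the region on the right; chasing the curves that enter $R$ (they must leave $R$, but can only leave through $l$, which they already crossed) gives the contradiction that some pseudoline crosses $l$ twice.

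The main obstacle I anticipate is making the topological bookkeeping rigorous without hand-waving: precisely defining ``the side of $l$'', justifying that a cell is simply connected (or arguing around it), and handling the interaction of $\gamma$ and the other pseudolines with the Jordan curve $C$ in the non-simple case. The cleanest route is almost surely to cite the wiring-diagram normal form — in a wiring diagram every pseudoline is an $x$-monotone curve, so ``$l$ contains two boundary segments of $F$'' forces $F$ to be non-simply-connected or to violate $x$-monotonicity of some other wire, which is immediate. Given that the paper explicitly sets up wiring diagrams right after this lemma, I would keep the proof to a few lines using $x$-monotonicity and the Jordan curve theorem, and flag that the only real content is the standard fact that cells of a pseudoline arrangement are simply connected (equivalently, that the arrangement can be drawn as a wiring diagram).
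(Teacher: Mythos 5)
Your second, ``cleaner'' version is exactly the paper's proof: between the two boundary segments of $F$ on $l$ there is an intermediate boundary segment belonging to some other pseudoline $l'$, which is thereby forced to cross $l$ twice --- the paper states this in two sentences with a figure, at essentially the same level of rigor as your sketch (your Jordan-curve variant supplies the missing topological details, and in fact yields the contradiction more directly than you noticed: the crossing at the inner endpoint of $s_1$ lies on the $l$-arc of $C$, so the pseudoline through it meets the Jordan curve exactly once, already a parity contradiction, with no need for the ``wrap around'' discussion). One caveat: the wiring-diagram shortcut you suggest at the end would be circular in this paper, since Lemma~\ref{lemma:pseudoline_face_incidence} and Lemma~\ref{lemma:acyclic} are precisely what the author uses to justify the wiring-diagram representation.
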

\begin{proof}
    Suppose $l$ contains two boundary segments $r$ and $s$ of $F$ as in Figure~\ref{fig:pseudoline_face_incidence}. Then there is at least one intermediate segment $t$ that belongs to a pseudoline $l'$ which must intersect $l$ at least twice. This contradicts the definition of a pseudoline arrangement.
\end{proof}

\begin{figure}[h]
    \centering
    \includegraphics{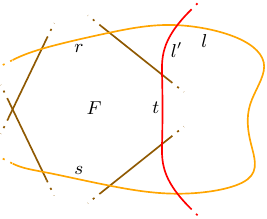}
    \caption{Proof idea of Lemma~\ref{lemma:pseudoline_face_incidence}.}
    \label{fig:pseudoline_face_incidence}
\end{figure}

In the following, we assume that a fixed unbounded cell of $\mathcal{A}$ is marked as the \textit{north cell} $N$. This induces an orientation of the pseudolines in which $N$ lies to the left of each pseudoline, see Figure~\ref{fig:proof_acyclicity:orientation}. Moreover, this induces an orientation of the \textit{arrangement graph} $G_\mathcal{A}$, whose vertices are the crossings and its edges represent the arcs between any two successive crossings, see Figure~\ref{fig:proof_acyclicity:arrangement_graph}.

\begin{figure}[tb]
    \centering
    \begin{subfigure}[b]{0.32\textwidth}
        \centering
        \includegraphics[page=1]{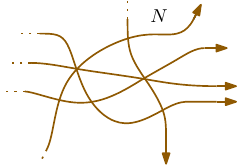}
        \caption{}
        \label{fig:proof_acyclicity:orientation}
    \end{subfigure}
    \hfill
    \begin{subfigure}[b]{0.32\textwidth}
        \centering
        \includegraphics[page=2]{figures/proof_acyclicity.pdf}
	\caption{}
        \label{fig:proof_acyclicity:arrangement_graph}
    \end{subfigure}
    \hfill
    \begin{subfigure}[b]{0.32\textwidth}
        \centering
        \includegraphics[page=3]{figures/proof_acyclicity.pdf}
	\caption{}
        \label{fig:proof_acyclicity:ccw_cycle}
    \end{subfigure}
	
    \caption{The orientation induced by fixing a north cell is acyclic.}
    \label{fig:proof_acyclicity}
\end{figure}

\begin{lemma}\label{lemma:acyclic}
    The directed graph $G_\mathcal{A}$ is acyclic.
\end{lemma}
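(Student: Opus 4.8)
The plan is to show that the orientation of $G_{\mathcal A}$ induced by the north cell $N$ admits a topological sorting, equivalently that it contains no directed cycle. Suppose for contradiction that $C$ is a directed cycle in $G_{\mathcal A}$. Since $G_{\mathcal A}$ is plane, $C$ bounds a bounded region $R$ of the plane. The key idea is to derive a contradiction from the interaction of the pseudolines with this region: a pseudoline $l$ that enters $R$ must also leave it (pseudolines are unbounded), so it crosses the boundary cycle $C$ an even number of times, and each such crossing with $\partial R$ happens either at a vertex of $C$ (a crossing of $\mathcal A$) or transversally through an edge of $C$.

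First I would set up the orientation carefully using Lemma~\ref{lemma:pseudoline_face_incidence} and the north cell: orient every pseudoline so that $N$ is to its left, and orient every edge (arc) of $G_{\mathcal A}$ in the direction of the pseudoline carrying it; at a crossing the two local pieces agree on this so the orientation of $G_{\mathcal A}$ is well defined. Next I would record the crucial local fact: at each vertex $v$ of $G_{\mathcal A}$ (a crossing of two pseudolines in the simple case), the cyclic sequence of the (up to four) edge-ends around $v$ alternates in–out–in–out, because the two pseudolines cross transversally there; in particular the two out-edges at $v$ are not cyclically adjacent, and similarly for the two in-edges. This is exactly the statement that a crossing is a ``source–sink free'' configuration locally, and it is what forbids small counterclockwise cycles as suggested by Figure~\ref{fig:proof_acyclicity:ccw_cycle}.

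The main step is then a winding/rotation argument on the cycle $C$. Traverse $C$ following its orientation and consider, at each vertex $v$ of $C$, whether the path turns ``left'' or ``right'' there (i.e. whether the interior $R$ lies on the inside of the turn); summing these turning angles must give $\pm 2\pi$ by the theorem on the total turning of a simple closed curve. I would argue that the induced orientation forces all turns to be of the same handedness: because of the alternation fact above, at a vertex traversed by $C$ the unique outgoing edge of $C$ is forced to be the one ``after'' the incoming edge in, say, the clockwise order around $v$ as seen from inside $R$ — intuitively, once $N$ is fixed to the left of every pseudoline, the flow along $G_{\mathcal A}$ always ``turns the same way'' relative to a bounded region. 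Hence the total turning is a sum of terms of one fixed sign and cannot equal $-2\pi$ (or $+2\pi$, whichever is excluded), a contradiction.

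The part I expect to be the real obstacle is pinning down precisely that ``all turns have the same sign'', i.e. translating the left-of-$N$ condition into a statement about which outgoing edge $C$ must take at each vertex relative to the region it encloses; this requires being careful about orientation conventions and about vertices of $C$ where more than two pseudolines meet in the non-simple case (there one uses the same alternation of in/out around the vertex). A cleaner alternative, which I would fall back on if the turning-angle bookkeeping gets unwieldy, is a potential-function argument: exhibit a real-valued function $\phi$ on the vertices of $G_{\mathcal A}$ that strictly increases along every directed edge — e.g. let $\phi(v)$ be the number of crossings lying weakly to the left of $v$ in a sweep, or the signed area swept out, or simply a coordinate from a wiring-diagram normal form — so that the existence of a directed cycle is immediately impossible. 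Either way the conclusion is that $G_{\mathcal A}$ is acyclic, which is what licenses the left-to-right greedy coloring used to prove Theorem~\ref{thm:crossing_coloring}.
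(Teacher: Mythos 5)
Your proposal has a genuine gap, and in fact its ``crucial local fact'' is false: at a crossing of two oriented pseudolines the two incoming edge-ends are cyclically \emph{adjacent} around the vertex, as are the two outgoing ones (think of the $x$-axis oriented east and the $y$-axis oriented north: the rays read out, out, in, in around the origin), so the in/out pattern does not alternate. More importantly, the step you yourself flag as the obstacle does not go through: at a vertex of a directed cycle $C$ the outgoing edge of $C$ is not ``forced'' by anything --- a degree-$4$ vertex has two outgoing edges and the cycle may use either, so $C$ can perfectly well turn left at some vertices and right at others, and the total-turning argument collapses. Nothing in your sketch ties the local choice of outgoing edge to the north cell $N$; but that global input is exactly what is needed. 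The paper's proof supplies it by numbering the pseudolines in counterclockwise order as they leave $N$ and observing that if an arc of $l'$ follows an arc of $l$ along a counterclockwise cycle, then $l'$ crosses $l$ from right to left, which forces $l$'s label to be at most $l'$'s; monotonicity around the cycle then makes all labels equal, a contradiction. So the correct potential function lives on the \emph{pseudolines} (their cyclic position around $N$), not on the vertices.

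Your fallback vertex potentials do not rescue the argument either: ``a coordinate from a wiring-diagram normal form'' is circular in this paper's logic, since Lemmas~\ref{lemma:pseudoline_face_incidence} and~\ref{lemma:acyclic} are precisely what justify sweeping the arrangement into a wiring diagram; and ``the number of crossings weakly to the left of $v$'' is not well defined for a general pseudoline arrangement before such a sweep order has been established. To repair your write-up, replace the turning-angle bookkeeping by the labelling argument above (or prove, as a standalone claim, that ``$l'$ crosses $l$ from right to left'' implies the stated inequality of counterclockwise labels around $N$, which is the one nontrivial geometric fact in the paper's proof).
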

\begin{proof}
    Number the pseudolines in counterclockwise order, starting from~$N$, as shown in Figure~\ref{fig:proof_acyclicity:arrangement_graph}. Suppose there is a counterclockwise oriented cycle of length~$k$ consisting of arcs that belong to the pseudolines~$l_1, \cdots, l_k, l_{k+1} = l_1$ in this order. For any~$i$, $l_{i+1}$ crosses~$l_i$ from right to left, which implies~$l_i \leq l_{i+1}$, see Figure~\ref{fig:proof_acyclicity:ccw_cycle} for an illustration. Hence~$l_1 \leq \cdots \leq l_k \leq l_{k+1} = l_1$, therefore~$l_1 = \cdots = l_k$, which is a contradiction. The argument for a clockwise cycle is analogous.  
\end{proof}

Fix a topological sorting $\pi$ of $G_\mathcal{A}$, which exists because of the fact that $G_\mathcal{A}$ is acyclic (Lemma~\ref{lemma:acyclic}). We aim for coloring the crossings of $\mathcal{A}$ greedily in the order of $\pi$. For any crossing $c$, a \textit{conflict ancestor} is a crossing $c'$ that comes before $c$ w.r.t. $\pi$ and both $c$ and $c'$ are on the boundary of a common cell. Figure~\ref{fig:conflict_ancestors:definition} shows an example: The red crossings are conflict ancestors of $c$; the orange crossing may be a conflict ancestor depending on $\pi$.

\begin{figure}[tb]
    \centering
    \begin{subfigure}[b]{0.38\textwidth}
        \centering
        \includegraphics[page=1]{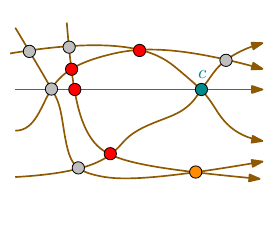}
        \caption{}
        \label{fig:conflict_ancestors:definition}
    \end{subfigure}
    \hfill
    \begin{subfigure}[b]{0.28\textwidth}
        \centering
        \includegraphics[page=2]{figures/conflict_ancestors_appendix.pdf}
	\caption{}
        \label{fig:conflict_ancestors:case_unbounded_cell}
    \end{subfigure}
    \hfill
    \begin{subfigure}[b]{0.32\textwidth}
        \centering
        \includegraphics[page=3]{figures/conflict_ancestors_appendix.pdf}
	\caption{}
        \label{fig:conflict_ancestors:case_bounded_cell}
    \end{subfigure}
	
    \caption{(a): Example for \textit{conflict ancestors}; (b), (c): Distinction between the cases of $F$ being an unbounded or a bounded cell in the proof of Lemma~\ref{lemma:bound_conflict_ancestors}.}
    \label{fig:conflict_ancestors}
\end{figure}

\begin{lemma}\label{lemma:bound_conflict_ancestors}
    Independently of the choice of the topological sorting, every crossing has at most~$n-1$ conflict ancestors.
\end{lemma}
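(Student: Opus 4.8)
The plan is to make the greedy argument work by pinning down, for a fixed crossing $c$, exactly which cells can carry a conflict ancestor. Write $l_1,\dots,l_d$ for the pseudolines through $c$, listed from top to bottom just left of $c$ in a wiring diagram in which the marked north cell $N$ is the topmost cell; then just right of $c$ this order is reversed, and by Lemma~\ref{lemma:acyclic} (oriented left to right under this convention) every arc of $G_{\mathcal A}$ runs in the direction of increasing $x$. Around $c$ there are $2d$ cells: the $d-1$ \emph{incoming wedges} squeezed between two consecutive arcs entering $c$ (for each of these $c$ is the unique rightmost boundary vertex), the $d-1$ \emph{outgoing wedges} between two consecutive arcs leaving $c$ (for each of these $c$ is the unique leftmost boundary vertex), and the two cells $F_{\mathrm N},F_{\mathrm S}$ lying directly above and below $c$. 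For any cell $F$ there are directed paths along its upper and its lower boundary chain from its leftmost vertex $\ell(F)$ to its rightmost vertex $r(F)$, so in $\pi$ the vertex $\ell(F)$ precedes and $r(F)$ succeeds every other vertex of $\partial F$. Hence an outgoing wedge (where $c=\ell(F)$) carries no conflict ancestor, and every conflict ancestor of $c$ lies on the boundary of one of the $d-1$ incoming wedges or of $F_{\mathrm N}$ or $F_{\mathrm S}$.

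Next I would pass from these $d+1$ cells to a single region. Let $R$ be the closure of their union. Since the incoming wedges, $F_{\mathrm N}$ and $F_{\mathrm S}$ are precisely the cells around $c$ other than the outgoing wedges, $R$ is a topological disk with $c$ on its boundary, on the ``eastern'' side between the outgoing arcs of $l_1$ and $l_d$, and by the previous paragraph every conflict ancestor of $c$ lies on $\partial R$. The crucial point is then an analogue of Lemma~\ref{lemma:pseudoline_face_incidence}: $\partial R$ meets each pseudoline in at most one arc. Granting this, $\partial R$ is a closed boundary assembled from at most $n$ arcs and hence runs through at most $n$ crossings; one of them is $c$, so $c$ has at most $n-1$ conflict ancestors.

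To prove the single-arc claim I would argue as in Lemma~\ref{lemma:pseudoline_face_incidence}: the boundary of $R$, like that of a cell, decomposes into an upper and a lower $x$-monotone chain (again using that no pseudoline is met twice by another), so two disjoint arcs of a pseudoline $l$ on $\partial R$ would force some other pseudoline crossing the part of $R$ ``between'' them to meet $l$ a second time. The remaining bookkeeping is the dichotomy drawn in Figure~\ref{fig:conflict_ancestors}(b),(c): if $R$ is bounded the closed-curve count gives at most $n$ crossings on $\partial R$ and hence at most $n-1$ conflict ancestors; if $R$ is unbounded — which happens exactly when one of the $d+1$ relevant cells is unbounded — then at least one bounding pseudoline escapes to infinity and contributes no vertex, so $\partial R$ carries at most $n-1$ crossings and the bound again follows.

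The step I expect to be the main obstacle is the $x$-monotonicity / single-arc claim for $R$: unlike a cell, $R$ is glued from several cells, and one must rule out a pseudoline that leaves $R$ across an outgoing wedge and re-enters it. I plan to keep the whole discussion inside the wiring diagram and treat $R$ as the region swept between its two monotone boundary chains, so that a repeated appearance of a pseudoline on $\partial R$ once more contradicts the defining property of a pseudoline arrangement; the unbounded case is then just a careful instance of the ``$(n-1)$ crossings'' version of Lemma~\ref{lemma:pseudoline_face_incidence}.
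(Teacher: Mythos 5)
There is a genuine gap, and it sits exactly at the step you flagged: the claim that $\partial R$ meets each pseudoline in at most one arc is false. The region $R$ omits the outgoing wedges, and a pseudoline $l$ not through $c$ can bound $F_{\mathrm N}$, then an outgoing wedge, then $F_{\mathrm S}$ along three consecutive segments; the middle segment is removed from $\partial R$, so $l$ appears on $\partial R$ in two disjoint pieces. Concretely, take $l_1:y=x$ and $l_2:y=-x$ with $c$ the origin, and let $l$ be a near-vertical line far to the east, say $y=1000(x-5)$: then $l$ touches the cell above $c$, the outgoing wedge, and the cell below $c$ in that order, and $\partial R$ contains two separated pieces of $l$. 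Moreover, even a single connected piece of $\partial R$ on one pseudoline can pass through several crossings: with $l_1:y=-x$, $l_2:y=0$, $l_3:y=x$ through $c$ and a steep line $l$ near $x=-5$, the line $l$ cuts across both incoming wedges and both $F_{\mathrm N}$ and $F_{\mathrm S}$, contributing four consecutive arcs of $\partial R$ with three crossings strictly inside that stretch. So the inference ``$\partial R$ is assembled from at most $n$ arcs, hence carries at most $n$ crossings'' does not go through; the Lemma~\ref{lemma:pseudoline_face_incidence}-style argument really does need a single cell (which lies entirely on one side of each bounding pseudoline), not a union of cells glued around $c$. You would also need to justify that $R$ is a disk and that no crossing of a fan cell ends up in the interior of $R$, but these are secondary.

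Your first reduction is correct and is a nice observation: in the left-to-right orientation, $c$ is the leftmost vertex of every outgoing wedge, so those cells contribute no conflict ancestors, and everything concentrates on the $d-1$ incoming wedges together with $F_{\mathrm N}$ and $F_{\mathrm S}$. The paper avoids the problematic boundary count by a different decomposition: delete the $d$ pseudolines through $c$, let $F$ be the cell of the reduced arrangement containing $c$, and observe that every conflict ancestor lies on $\partial F$ (the only crossing of $\mathcal{A}$ interior to $F$ is $c$ itself). Lemma~\ref{lemma:pseudoline_face_incidence} then applies legitimately to $F$ as a cell of an $(n-d)$-line arrangement, giving at most $n-d$ (or $n-d-1$ if unbounded) old crossings on $\partial F$; reinsertion adds at most $2d$ new crossings on $\partial F$, of which only the at most $d$ incoming ones can precede $c$; and in the bounded case one old boundary crossing is shown to be a successor of $c$. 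If you want to salvage your route, you will essentially be forced into this same bookkeeping, separating the pseudolines through $c$ from the rest rather than counting arcs of $\partial R$ uniformly.
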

\begin{proof}
    Let $c$ be a crossing of degree~$k$ in~$\mathcal{A}$, and let~$p_1,\cdots, p_k$ denote the involved pseudolines. Let~$\mathcal{A'} := \mathcal{A} - \{p_1, \cdots, p_k\}$ be the arrangement obtained by dropping~$p_1, \cdots, p_k$. Let~$F$ be the cell of~$ \mathcal{A'}$ in whose area~$c$ lies in~$\mathcal{A}$. Every conflict ancestor~$c'$ of~$c$ is a crossing on the boundary of~$F$, which is either also a crossing in~$\mathcal{A}'$ (type~I), or arises as a crossing when reinserting~$p_1, \cdots, p_k$ (type~II). We distinguish two cases, which are both illustrated in Figure~\ref{fig:conflict_ancestors:case_unbounded_cell} and Figure~\ref{fig:conflict_ancestors:case_bounded_cell}. In both Figures, crossings that can be conflict ancestors of~$c$ are colored yellow (type~I) or green (type~II).

    \begin{itemize}
        \item Case 1: The cell~$F$ is unbounded. By Lemma~\ref{lemma:pseudoline_face_incidence},~$F$ contains at most~$n-k-1$ crossings on its boundary, which bounds the conflict ancestors of type~I. Reinserting~$p_1, \cdots, p_k$ in~$\mathcal{A}'$ adds at most~$2k$ crossings on the boundary of~$F$, of which at most~$k$ are incoming neighbors of~$c$. The outgoing ones cannot be conflict ancestors of~$c$, as they appear after~$c$ in every topological sorting. Hence, the conflict ancestors of type~II are bounded by~$k$. In total,~$c$ has at most~$n-1$ conflict ancestors.

        \item Case 2: The cell~$F$ is bounded. By Lemma~\ref{lemma:pseudoline_face_incidence},~$F$ contains at most~$n-k$ crossings on its boundary, which bounds the conflict ancestors of type~I. As in the previous case, conflict ancestors of type~II are bounded by $k$. So far this argument shows that~$c$ has at most~$n$ conflict ancestors in total. We have to argue that at least one of the crossings in~$\mathcal{A'}$ that lie on the boundary of~$F$ cannot be a conflict ancestor of type~I. Observe that in~$G_\mathcal{A}$ there always exists a directed path from~$c$ to some crossing~$t$ in~$\mathcal{A'}$ which lies on the boundary of $F$: Starting from~$c$, take any outgoing arc to some crossing~$c'$. If~$c'$ is not already a crossing in~$\mathcal{A'}$, then follow the pseudoline that forms in~$\mathcal{A}'$ the boundary segment of~$F$ on which $c'$ lies until reaching a crossing in~$\mathcal{A'}$ (See path~$c$-$c'$-$t$ in the example in Figure~\ref{fig:conflict_ancestors:case_bounded_cell}). In every topological sorting,~$t$ comes after~$c$, so~$t$ cannot be a conflict ancestor of~$c$.\qedhere
    \end{itemize}
\end{proof}

\begin{proof}[Proof of Theorem~\ref{thm:crossing_coloring_face_respecting}]
    Mark an arbitrary unbounded cell in $\mathcal{A}$ as the north cell. Choose an arbitrary topological sorting $\pi$ of the corresponding acyclic orientation of $G_\mathcal{A}$ (Lemma~\ref{lemma:acyclic}). Color the crossings in this order: When coloring a crossing $c$, we cannot use a color that was already assigned to a crossing $c'$ which lies together with $c$ on the boundary of a common cell. These are exactly the conflict ancestors, whose number is bounded by~$n-1$ in Lemma~\ref{lemma:bound_conflict_ancestors}. So among $n$  colors, there is always a spare color available.
\end{proof}

The bound in Theorem~\ref{thm:crossing_coloring_face_respecting} is tight: Consider a convex $n$-gon~$F$ without parallel sides. Extending the sites in a straight way to infinity defines a line arrangement in which~$F$ is a cell that has $n$ crossings on its boundary, see Figure~\ref{fig:pentagon}. Hence, avoiding twice the same color on the boundary of~$F$ requires~$n$ colors.

\begin{figure}
    \centering
    \includegraphics{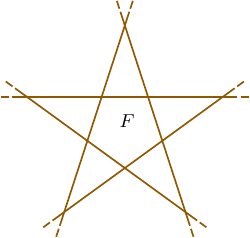}
    \caption{Construction that shows that Theorem~\ref{thm:crossing_coloring_face_respecting} is tight.}
    \label{fig:pentagon}
\end{figure}

\begin{figure}[h]
    \centering
    \includegraphics{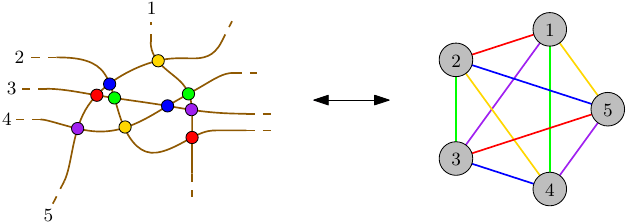}
    \caption{Coloring simple arrangements is equivalent to edge-coloring of $K_n$.}
    \label{fig:coloring_simple_arrangement}
\end{figure}

\subsection{Avoiding twice the same color along any pseudoline}

In view of Theorem~\ref{thm:crossing_coloring}, we now focus on coloring the crossings of a pseudoline arrangement~$\mathcal{A}$ avoiding twice the same color along any pseudoline. If~$\mathcal{A}$ is simple, this is equivalent to edge-coloring of the complete graph $K_n$, see Figure~\ref{fig:coloring_simple_arrangement}. It is a well-known fact that if $n$ is odd one requires exactly $n$ colors, and if $n$ is even then $n-1$ colors are sufficient. So Theorem~\ref{thm:crossing_coloring} is interesting in particular for non-simple arrangements. Unfortunately, we were unable to find an elementary proof or a simple deterministic algorithm for establishing such a coloring in the non-simple case, even though intuitively it seems to be the easier case, as less crossings appear on each pseudoline.

In 1972, Erd{\H{o}}s, Faber and Lovász met on a party and came up with the following problem (see~\cite{erdoes81}): Let $A_1, \cdots, A_n$ be a family of sets, each of cardinality $n$, and $\lvert A_i\cap A_j\rvert \leq 1$ for all $i < j$. Can one color the elements of $\bigcup_i A_i$ using $n$ colors such that each $A_i$ contains all colors? This became known as the Erd{\H{o}}s-Faber-Lovász conjecture. It was eventually proven in 2021 by Kang et al. \cite{kangEtAl23}. In hypergraph language, it is equivalent to the following statement:

\begin{theorem}[D. Y. Kang, T. Kelly, D. Kühn, A. Methuku \& D. Osthus, 2021]\label{thm:erdosFaberLovaszConjecture}
    \hspace{0.01cm}\\
    For every simple hypergraph $\mathcal{H}$ with $n$ vertices, $\chi'(H)\leq n$.
\end{theorem}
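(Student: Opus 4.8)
This is the Erd\H{o}s--Faber--Lov\'asz conjecture, so I would not expect a short argument; the plan is to follow the absorption-plus-nibble strategy of Kang, Kelly, K\"uhn, Methuku and Osthus. First I would reduce to a convenient normal form. Deleting vertices that lie in at most one edge and discarding edges of size at most one changes nothing, and since every sub-hypergraph of an edge-$k$-colorable hypergraph is edge-$k$-colorable, one may also assume that $\mathcal{H}$ is edge-maximal among simple hypergraphs on its $n$ vertices and that $n$ is large. It is cleaner to prove the stronger \emph{list} version, in which each edge carries a list of $n$ admissible colors, since this is the form the probabilistic machinery naturally produces. A crude counting argument then settles all hypergraphs that are far from extremal, so the genuinely hard case is a ``dense'' one, close to the tight examples.

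Next I would partition the edges according to their size. \emph{Large} edges, say of size at least $\varepsilon n$, are few --- a short double-counting argument, using that $\mathcal{H}$ is simple, bounds their number by $\mathcal{O}(1/\varepsilon)$ --- and they are rigidly structured, pairwise meeting in at most one vertex; these I would color first by a direct, essentially deterministic argument, taking care to leave at every vertex a large, ``random-looking'' reservoir of colors still available. (Edges of intermediate size need their own, similarly careful, treatment.) \emph{Small} edges are the opposite extreme: a vertex can lie in very many of them, but each small edge is spread thinly across the vertex set, so the conflict structure among them is sparse relative to the number of free colors. Almost all of them I would color by the R\"odl nibble (the semi-random method), using the Lov\'asz Local Lemma and Talagrand-type concentration to keep every edge's available list close to uniform as the process runs.

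After the nibble only a tiny, sparse remnant of edges is left uncolored, and to finish them I would use \emph{absorption}: before the nibble, reserve a random sparse family of ``absorbing'' partial colorings so that, whatever small set of edges survives, some absorber can be switched on to color exactly that set. Proving that such absorbers exist and are robust enough to swallow \emph{every} possible small remnant --- and simultaneously controlling the interface between the deterministic coloring of the large edges and the probabilistic coloring of the rest, so that the reserved reservoirs stay usable --- is the real obstacle; this is where essentially all of the length and subtlety of \cite{kangEtAl23} resides. Reproducing that argument is far beyond the scope of the present paper, so I would simply invoke Theorem~\ref{thm:erdosFaberLovaszConjecture} as a black box.
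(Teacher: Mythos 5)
The paper gives no proof of this statement either: it is the Erd\H{o}s--Faber--Lov\'asz conjecture, stated as an external result and cited from \cite{kangEtAl23}, exactly as you conclude. Your sketch of the Kang--Kelly--K\"uhn--Methuku--Osthus strategy is a fair summary, and treating the theorem as a black box (i.e.\ citing \cite{kangEtAl23} rather than re-invoking the theorem itself) is precisely what the paper does.
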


Here, a hypergraph $\mathcal{H}=(V, \mathcal{E})$ is \textit{simple} if all hyperedges have cardinality at least~$2$ and for all~$E_1, E_2\in\mathcal{E}$,~$E_1 \neq E_2$ it holds that~$\lvert E_1\cap E_2\rvert\leq 1$.

\begin{proof}[Proof of~Theorem~\ref{thm:crossing_coloring}]
    The statement is equivalent to the existence of an edge-coloring of the hypergraph~$\mathcal{H}_{\text{line-vertex}}$ using $n$ colors. $\mathcal{H}_{\text{line-vertex}}$ is simple, because there can be at most one pseudoline passing through any pair of crossings, otherwise this would mean a pair of pseudolines crossing twice. Then the statement follows from Theorem~\ref{thm:erdosFaberLovaszConjecture}.
\end{proof}

It would be nice to have a bound on the required number of colors that also takes into account how far the arrangement is away from being a simple arrangement. For this purpose, we introduce $\operatorname{mx}(\mathcal{A})$, which is defined as the maximal number of crossings along any pseudoline in~$\mathcal{A}$. For simple arrangements we have $\operatorname{mx}(\mathcal{A})=n-1$. If one excludes \textit{trivial arrangements} in which all pseudolines cross in a single point ($\operatorname{mx}(\mathcal{A})=1$), then the parameter $\operatorname{mx}$ bounds the number of pseudolines from above. This is why~$\operatorname{mx}(\mathcal{A})$ can be interpreted as an alternative measure of the size of an arrangement. In fact it was shown recently in~\cite{dumitrescu2023} that the number of pseudolines is linearly bounded by $\operatorname{mx}$, in particular $n \leq 845\cdot \operatorname{mx}(\mathcal{A})$ for large values of $n$. A weaker quadratic upper bound can be obtained as a simple observation:

\begin{observation}\label{obs:bound_on_mx}
    For every non-trivial arrangement $\mathcal{A}$ of $n$ pseudolines we have \[n \leq \operatorname{mx}(\mathcal{A})\left(\operatorname{mx}(\mathcal{A}) - 1\right)+1 \leq \left(\operatorname{mx}(\mathcal{A})\right)^2.\]
\end{observation}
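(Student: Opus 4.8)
The plan is to pick a pseudoline $l$ that attains $\operatorname{mx}(\mathcal{A})$, so that $l$ carries exactly $m := \operatorname{mx}(\mathcal{A})$ crossings, and then to bound how many \emph{other} pseudolines can exist by counting how each of them meets $l$. Every pseudoline $l' \neq l$ crosses $l$ in exactly one point, and that point is one of the $m$ crossings on $l$. So I would partition the $n-1$ other pseudolines according to which crossing of $l$ they pass through, obtaining nonempty classes $C_1, \dots, C_m$ (some may be empty, but at most $m$ classes in total), and argue that each class has size at most $m-1$.

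For the size bound on a single class, fix a crossing $c$ on $l$ and let $C$ be the set of pseudolines passing through $c$ other than $l$. Pick any $l' \in C$. Since the arrangement is non-trivial, not all pseudolines pass through $c$, so there is at least one pseudoline $l''$ \emph{not} through $c$; then $l''$ meets $l'$ at some point $c' \neq c$, so $c'$ is a crossing on $l'$ distinct from $c$. More to the point, every other member $l'''$ of $C$ crosses $l'$ at a point different from $c$ (since $l'$ and $l'''$ meet only once, and if they met at $c$ that is already accounted for — wait, they do both pass through $c$). Let me re-plan this step: I want to count crossings on $l'$. The pseudoline $l'$ has at most $m$ crossings. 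Each of the other $|C|-1$ members of $C$ meets $l'$ \emph{at} $c$, contributing a single shared crossing $c$, not new ones — so that does not immediately help. The cleaner route is: consider the $|C|$ pseudolines of $C$ together with $l$; they all pass through $c$. Any pseudoline $l''$ outside this set meets each of these $|C|+1$ pseudolines at $|C|+1$ \emph{distinct} points (distinct because two of them would force $l''$ to meet the pencil twice... actually they are distinct because $l''$ meets each pencil-member once, and if two such meeting points coincided at a point $q$, then $q \neq c$, and two pencil-members plus $l''$ all pass through $q$ — two of those pencil members then meet at both $c$ and $q$, contradiction). Hence $l''$ has at least $|C|+1$ crossings, so $|C|+1 \leq m$, i.e.\ $|C| \leq m-1$. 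This requires the existence of such an $l''$, which is exactly non-triviality.

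Putting it together: $n - 1 = \sum_{i=1}^{m} |C_i| \leq m(m-1)$, so $n \leq m(m-1)+1 \leq m^2$, which is the claim. I would also handle the degenerate edge case $m = 1$ separately — but $\operatorname{mx}(\mathcal{A}) = 1$ means the arrangement is trivial, which is excluded by hypothesis, so $m \geq 2$ and no pencil $C_i$ is problematic; in fact for $m \geq 2$ the bound is non-vacuous.

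The main obstacle is the argument that a pseudoline $l''$ avoiding a pencil of size $|C|+1$ must accumulate $|C|+1$ distinct crossings on it — i.e.\ carefully justifying that the intersection points of $l''$ with the pencil members are pairwise distinct, which hinges on the defining property that any two pseudolines cross exactly once. The role of non-triviality is precisely to guarantee that such an $l''$ exists for the pencil through each crossing $c$ of $l$; without it (the trivial arrangement) there is no external pseudoline and the count collapses. Everything else is bookkeeping.
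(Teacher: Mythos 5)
Your argument is correct and is essentially the paper's own proof: partition the $n-1$ other pseudolines according to which of the at most $\operatorname{mx}(\mathcal{A})$ crossings on $l$ they pass through, and bound each pencil by $\operatorname{mx}(\mathcal{A})-1$ by counting the pairwise distinct crossings it forces on a pseudoline that avoids the pencil point, whose existence is exactly what non-triviality provides. The only cosmetic difference is that you include $l$ in the pencil and count $|C|+1$ distinct crossings on $l''$ directly, whereas the paper counts the crossing of $l''$ with $l$ as a separate additional crossing.
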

\begin{proof}
    Let~$l$ be an arbitrary pseudoline of~$\mathcal{A}$. Every other pseudoline crosses~$l$ in one of at most $\operatorname{mx}(\mathcal{A})$ crossings. Let $c$ be any such crossings. The pseudolines that cross~$l$ in~$c$ must cross any other pseudoline~$l'\neq l$ in distinct crossings; see Figure~\ref{fig:mx:bound} for an illustration. Hence, their number is bounded by~$\operatorname{mx}(\mathcal{A})-1$, as~$l'$ has one further crossing with~$l$ and at most~$\operatorname{mx}(\mathcal{A})$ crossings in total. Therefore, the number of pseudolines different to~$l$ is bounded by $\operatorname{mx}(\mathcal{A})\left(\operatorname{mx}(\mathcal{A}) - 1\right)$.
\end{proof}

\begin{figure}[tb]
    \centering
    \begin{subfigure}[b]{0.55\textwidth}
        \centering
        \includegraphics{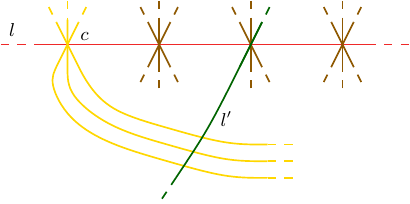}
        \caption{}
        \label{fig:mx:bound}
    \end{subfigure}
    \hfill
    \begin{subfigure}[b]{0.4\textwidth}
        \centering
        \includegraphics{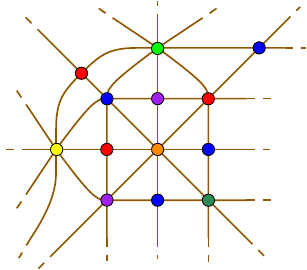}
	\caption{}
        \label{fig:mx:example_gap_3}
    \end{subfigure}
	
    \caption{(a): Illustration for the proof of Observation~\ref{obs:bound_on_mx}; (b): Arrangement with $\operatorname{mx} = 4$ that requires $7$ colors.}
    \label{fig:mx}
\end{figure}

\begin{conjecture}\label{conj:crossing_coloring_dep_on_mx}
    There is a constant $c$ so that the crossings of every pseudoline arrangement~$\mathcal{A}$ can be colored using $\operatorname{mx}(\mathcal{A})+c$ colors so that no color appears twice along any pseudoline.
\end{conjecture}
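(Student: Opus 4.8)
We do not settle Conjecture~\ref{conj:crossing_coloring_dep_on_mx}; what follows is the line of attack we consider most promising, together with the point at which it currently breaks down. The statement is equivalent to $\chi'(\mathcal{H}_{\text{line-vertex}}(\mathcal{A}))\le\operatorname{mx}(\mathcal{A})+c$, since $\operatorname{mx}(\mathcal{A})$ is exactly the maximum vertex-degree of $\mathcal{H}_{\text{line-vertex}}(\mathcal{A})$. Equivalently, let $X$ be the \emph{conflict graph} whose vertices are the crossings of $\mathcal{A}$, two crossings being adjacent iff they lie on a common pseudoline; we want $\chi(X)\le\operatorname{mx}(\mathcal{A})+c$. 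Now $X$ is a union of $n$ cliques, one clique $K_p$ for each pseudoline $p$ (the at most $\operatorname{mx}(\mathcal{A})$ crossings on $p$), any two of which share at most one vertex. This is precisely the Erd\H{o}s--Faber--Lov\'asz setting, so Theorem~\ref{thm:erdosFaberLovaszConjecture} gives $\chi(X)\le n$, and combined with the bound $n\le 845\cdot\operatorname{mx}(\mathcal{A})$ of~\cite{dumitrescu2023} this already yields $\chi(X)=O(\operatorname{mx}(\mathcal{A}))$. So the conjecture asks to replace this multiplicative constant by $1$ at the cost of an additive constant, and any argument doing so must use the geometry: a union of $n=q^2+q+1$ cliques of size $q+1$ pairwise meeting in one vertex can have $\chi=q^2+q+1$ (the incidence structure of a projective plane of order $q$, whose conflict graph is complete), so the bound fails for abstract linear hypergraphs and one must exploit that no pseudoline arrangement behaves like a projective plane.

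The plan I would try is to split the crossings by degree. The crossings of degree $2$ form a simple graph $G_2$ on the $n$ pseudolines with maximum degree at most $\operatorname{mx}(\mathcal{A})$, so Vizing's theorem colours them with $\operatorname{mx}(\mathcal{A})+1$ colours. The real work is the crossings of degree at least $3$; these need not be rare and may occur along every pseudoline. I would fix the north-cell orientation and a topological sorting $\pi$ of $G_\mathcal{A}$ as in Section~\ref{sec:coloring_crossings}, insert the degree-$\ge 3$ crossings greedily in the order of $\pi$, and attempt a geometric \emph{spreading lemma}: along a single pseudoline only boundedly many ``blocks'' of crossings separate consecutive degree-$\ge 3$ crossings, because $\operatorname{mx}(\mathcal{A})$ is the whole budget of crossings on that line, and the pseudolines other than $p$ meeting a degree-$\ge 3$ crossing on $p$ are severely constrained (each crosses $p$ exactly once and has at most $\operatorname{mx}(\mathcal{A})$ crossings in total). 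The hope is that this forces enough \emph{repetition} among the colours of the crossings conflicting with a given $c$ --- combined with the type~II charging of ``first crossing after $c$'' from the proof of Lemma~\ref{lemma:bound_conflict_ancestors} --- that a palette of size $\operatorname{mx}(\mathcal{A})+c$ always leaves a free colour for $c$.

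The main obstacle is exactly a pseudoline incident to many high-degree crossings. For such a crossing $c$ the number of conflicting crossings is up to $\deg(c)\cdot(\operatorname{mx}(\mathcal{A})-1)$, far exceeding the budget, so only the \emph{multiplicity} of colours among these conflicts --- many conflicting crossings sharing a colour --- can rescue the greedy step. Controlling those multiplicities is precisely what makes the Erd\H{o}s--Faber--Lov\'asz theorem hard, and it does not localise, so any ``insert one pseudoline / one crossing'' scheme keeps running into the full EFL difficulty. Probabilistic edge-colouring (the semi-random method, or Kahn's theorem for linear hypergraphs) does apply and gives $\chi(X)\le(1+o(1))\operatorname{mx}(\mathcal{A})$, but the error term is not an additive constant. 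My expectation is that the conjecture as stated requires a genuinely new structural input --- for instance a theorem that the conflict graph of an arrangement with $\operatorname{mx}(\mathcal{A})=m$ has clique number $O(m)$ and is ``nearly perfect'' --- which one could then combine with the fractional relaxation of Erd\H{o}s--Faber--Lov\'asz to round down to $m+O(1)$ colours.
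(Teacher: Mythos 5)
This statement is a conjecture which the paper explicitly leaves open, so there is no proof to compare against; you correctly recognize this and do not claim to settle it. Your survey of the partial evidence matches what the paper actually establishes: the reformulation as edge-coloring of $\mathcal{H}_{\text{line-vertex}}$ with maximum degree $\operatorname{mx}(\mathcal{A})$, the bound $\chi'\leq n$ from Theorem~\ref{thm:erdosFaberLovaszConjecture}, the linear bound $n\leq 845\cdot\operatorname{mx}(\mathcal{A})$ from~\cite{dumitrescu2023} giving $O(\operatorname{mx}(\mathcal{A}))$ colors, and the asymptotic $(1+\varepsilon)\operatorname{mx}(\mathcal{A})$ bound via Kahn's theorem, which is exactly the paper's Proposition~\ref{prop:crossing_coloring_approximately}. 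Your projective-plane example (cf.\ the extremal case of Observation~\ref{obs:bound_on_mx}) is a genuinely useful observation the paper does not make explicit: it shows the conjecture fails for abstract linear hypergraphs and that any proof must use realizability as a pseudoline arrangement.

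Two caveats on the parts you present as settled or promising. First, Kahn's theorem as stated (Theorem~\ref{thm:kahn_approximate_edge_coloring}) requires a $k$-bounded hypergraph for fixed $k$, so the $(1+o(1))\operatorname{mx}(\mathcal{A})$ bound you cite is only known under a cap on the crossing degrees, as in Proposition~\ref{prop:crossing_coloring_approximately}; for arrangements with crossings of unbounded degree even this asymptotic form is not established by the tools in the paper. Second, the proposed greedy ``spreading lemma'' is not yet a lemma: as you yourself note, a single degree-$\geq 3$ crossing $c$ can conflict with up to $\deg(c)\cdot(\operatorname{mx}(\mathcal{A})-1)$ earlier crossings, and no ordering of insertion reduces this count; only colour repetition among conflicts can save the greedy step, and you give no mechanism forcing such repetition. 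The one data point the paper offers against over-optimism is the arrangement of Figure~\ref{fig:mx:example_gap_3} with $\operatorname{mx}=4$ requiring $7$ colors, showing $c\geq 3$ if the conjecture holds at all; any proposed scheme should at least be tested against it.
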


Figure~\ref{fig:mx:example_gap_3} shows a coloring of an arrangement~$\mathcal{A}$ with~$9$ pseudolines and~$\operatorname{mx}(\mathcal{A})=4$ using~$7$ colors. One can verify that this coloring is minimal. Hence, a constant as in Conjecture~\ref{conj:crossing_coloring_dep_on_mx} had to fulfill~$c\geq 3$. We were unable to find any arrangement where this gap is larger. If such a constant exists we expect it to be small.

We make use of the following result by Kahn \cite{kahn96, kangKellyKuehnMethukuOsthus21} in order to show that Conjecture~\ref{conj:crossing_coloring_dep_on_mx} holds at least asymptotically and under a certain restriction. A hypergraph $\mathcal{H}$ is \textit{$k$-bounded} if all hyperedges have cardinality at most $k$. Moreover, the \textit{codegree} $\operatorname{codeg}(\mathcal{H})$ is defined as the maximum intersection $\lvert E_1 \cap E_2\rvert$ over all pairs of hyperedges $E_1, E_2\in\mathcal{E}$, $E_1\neq E_2$.

\begin{theorem}[Kahn, 1996]\label{thm:kahn_approximate_edge_coloring}
    For every~$k, \varepsilon > 0$, there exists~$\delta > 0$ such that the following holds: If~$\mathcal{H}$ is a~$k$-bounded hypergraph of maximum degree at most~$D$ and~$\operatorname{codeg}(\mathcal{H})\leq\delta\cdot D$, then~$\mathcal{H}$ can be list edge colored using~$(1+\varepsilon)\cdot D$ colors.
\end{theorem}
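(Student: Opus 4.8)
We only sketch the idea; it is an application of the semi-random (``nibble'') method of R\"odl together with strong concentration, and the full argument is in~\cite{kahn96}. Fix $k$ and $\varepsilon$ (the statement is only substantive for $D$ large, where the concentration estimates bite) and fix a pool $\mathcal{C}$ of $\lceil(1+\varepsilon)D\rceil$ colours. The plan is to colour the edges in a number of randomized rounds so that after each round two quantities remain tightly controlled: the residual degree $d(v)$ of every vertex $v$ (the number of still-uncoloured edges through $v$), and, for every uncoloured edge $E$, the number $a(E)$ of colours in $L(E)$ not yet used on an edge meeting $E$.

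The core is a single \emph{nibble step}, run with a small constant $\alpha>0$. Independently activate each uncoloured edge with probability $\alpha$; give each activated edge $E$ a colour chosen uniformly from the $a(E)$ colours currently usable for $E$; keep that colour unless some other activated edge meeting $E$ drew the same colour; then permanently delete the edges whose colours were kept. A routine expectation computation shows that in one step $d(v)$ is multiplied by a factor $1-\Theta(\alpha)<1$, and that $a(E)$ stays comfortably larger than the number of uncoloured edges meeting $E$ — the crucial point being that a random colour chosen for one edge is unlikely to be simultaneously ruled out at all vertices of a nearby edge, so the usable sets shrink only slowly.

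The heart of the proof is concentration. One shows that after each step $d(v)$ and $a(E)$, as well as the auxiliary codegree-type statistics needed to keep the step's hypotheses valid on the residual hypergraph, deviate from their expectations by at most a factor $1\pm\varepsilon'$ for some $\varepsilon'\ll\varepsilon$, except with probability $o(1/n)$; this is established via Talagrand's inequality. Both hypotheses enter here: $k$-boundedness caps how many vertices a single colour choice can affect, bounding the Lipschitz and certifiability parameters, while the bound $\operatorname{codeg}(\mathcal{H})\le\delta D$ is precisely what makes the true behaviour agree with the idealized ``independent'' model — the pairs of edges meeting a common edge that in addition share a \emph{second} vertex, which are the source of the dangerous correlations, form only an $O(\delta)$-fraction of all such pairs, so choosing $\delta=\delta(k,\varepsilon)$ small enough renders their contribution negligible. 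A union bound over the vertices and edges then carries the invariants into the next round.

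After a number of rounds depending only on $k$ and $\varepsilon$, the maximum residual degree has dropped below $\eta D$ with $\eta:=\varepsilon/(4k)$, while every remaining uncoloured edge still has at least, say, $\varepsilon D/2$ usable colours in $\mathcal{C}$. Since such an edge meets at most $k\eta D=\varepsilon D/4<\varepsilon D/2$ other edges, a plain greedy list-edge-colouring finishes the leftover edges without ever leaving $\mathcal{C}$, so $\lceil(1+\varepsilon)D\rceil$ colours suffice overall. The main obstacle — and by far the heaviest ingredient — is this concentration bookkeeping: showing that the residual degrees, the usable-colour counts and the codegree statistics all stay concentrated \emph{simultaneously} through every round, and tracking how the multiplicative errors accumulate over the rounds without inflating the final count; it is exactly this that pins down the quantitative dependence of $\delta$ on $k$ and $\varepsilon$, and it is the reason the theorem is far from elementary.
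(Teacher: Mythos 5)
The paper does not actually prove this statement: it is imported as a black box, attributed to Kahn and cited via \cite{kahn96, kangKellyKuehnMethukuOsthus21}, so there is no in-paper argument to compare yours against. Your sketch correctly identifies the semi-random (R\"odl nibble) method plus concentration as the engine behind results of this type, and your account of the role of the two hypotheses -- $k$-boundedness capping the Lipschitz/certificate parameters, the codegree bound $\operatorname{codeg}(\mathcal{H})\le\delta D$ suppressing the correlations between edges that share two vertices -- matches the standard narrative.

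As a proof of the theorem \emph{as stated}, though, there is a genuine gap beyond the admitted hand-waving: the statement is about \emph{list} edge coloring, and the nibble step you describe is the one for the ordinary chromatic index. You fix a single global palette $\mathcal{C}$ and let each activated edge pick uniformly from its currently usable colors; when all edges draw from the same palette this is essentially the Pippenger--Spencer argument, but when each edge $E$ carries its own arbitrary list $L(E)$ the uniform choice loses the symmetry that the expectation computation relies on -- a color can be grossly over-requested at a vertex simply because it happens to lie in many of the incident lists, so the ``usable sets shrink only slowly'' claim no longer follows from a routine calculation. This is exactly why Kahn's list version is substantially harder than Pippenger--Spencer: he replaces the uniform choice by hard-core distributions on the color classes and analyzes them with entropy and local-limit arguments, and that machinery is the real content of \cite{kahn96}, not an optional refinement. (For this paper's actual application in Proposition~\ref{prop:crossing_coloring_approximately} the ordinary, non-list version would in fact suffice, but the theorem as quoted is the list version.) In short: your sketch is a fair description of the proof strategy for the easier non-list theorem, but it does not establish the statement given, and the statement should remain a citation.
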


The following Proposition~\ref{prop:crossing_coloring_approximately} is a simple application of Theorem~\ref{thm:kahn_approximate_edge_coloring}.

\begin{proposition}\label{prop:crossing_coloring_approximately}
    For every~$k, \varepsilon > 0$, there exists an~$\text{mx}_0\in\mathbb{N}$ so that the following\linebreak[4] holds: If a pseudoline arrangement~$\mathcal{A}$ only contains crossings of degree at most~$k$ and fulfills~\mbox{$\operatorname{mx}(\mathcal{A})\geq \text{mx}_0$}, then its crossings can be colored using~$(1+\varepsilon)\cdot \operatorname{mx}(\mathcal{A})$ colors so that no color appears twice along any pseudoline.
\end{proposition}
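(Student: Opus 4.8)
The plan is to apply Theorem~\ref{thm:kahn_approximate_edge_coloring} to the hypergraph $\mathcal{H} := \mathcal{H}_{\text{line-vertex}}(\mathcal{A})$, exactly along the lines of the proof of Theorem~\ref{thm:crossing_coloring}. Recall that a coloring of the crossings of $\mathcal{A}$ avoiding twice the same color along any pseudoline is precisely an edge coloring of $\mathcal{H}$, whose vertices are the $n$ pseudolines and whose hyperedges are the crossings, each crossing being identified with the set of pseudolines passing through it. So it suffices to verify that $\mathcal{H}$ satisfies the hypotheses of Kahn's theorem, with the stated number of colors.

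First I would record the relevant parameters of $\mathcal{H}$. Since every crossing of $\mathcal{A}$ has degree at most $k$, every hyperedge of $\mathcal{H}$ has cardinality at most $k$, so $\mathcal{H}$ is $k$-bounded. The degree of a vertex (pseudoline) $l$ in $\mathcal{H}$ is the number of crossings lying on $l$, so the maximum degree of $\mathcal{H}$ is exactly $D := \operatorname{mx}(\mathcal{A})$. Finally, two distinct crossings share at most one common pseudoline — otherwise two pseudolines would cross twice, contradicting the definition of a pseudoline arrangement — so $\operatorname{codeg}(\mathcal{H})\leq 1$ (and in particular $\mathcal{H}$ has no repeated hyperedges, and no hyperedge of cardinality $1$ since every crossing has degree at least $2$).

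Next I would fix the constant. Given $k$ and $\varepsilon$, let $\delta = \delta(k,\varepsilon) > 0$ be the constant provided by Theorem~\ref{thm:kahn_approximate_edge_coloring}, and set $\text{mx}_0 := \lceil 1/\delta\rceil$. If now $\mathcal{A}$ contains only crossings of degree at most $k$ and $\operatorname{mx}(\mathcal{A})\geq\text{mx}_0$, then $D = \operatorname{mx}(\mathcal{A})\geq 1/\delta$, whence $\operatorname{codeg}(\mathcal{H})\leq 1\leq\delta\cdot D$. Thus all hypotheses of Theorem~\ref{thm:kahn_approximate_edge_coloring} are met, so $\mathcal{H}$ can be (list) edge colored using $(1+\varepsilon)\cdot D = (1+\varepsilon)\cdot\operatorname{mx}(\mathcal{A})$ colors; rounding this quantity up to an integer if necessary and translating back yields the desired coloring of the crossings of $\mathcal{A}$.

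There is essentially no obstacle here — the entire content sits in Kahn's theorem. The only point requiring care, and the reason the statement needs a lower bound on $\operatorname{mx}(\mathcal{A})$, is the condition $\operatorname{codeg}(\mathcal{H})\leq\delta\cdot D$: the codegree is bounded by the absolute constant $1$, so this inequality holds exactly once the maximum degree $D = \operatorname{mx}(\mathcal{A})$ exceeds $1/\delta$, which is precisely what the choice of $\text{mx}_0$ guarantees. (The $k$-boundedness hypothesis is what forces the assumption that all crossing degrees are at most $k$, and is the reason this argument does not already settle Conjecture~\ref{conj:crossing_coloring_dep_on_mx}.)
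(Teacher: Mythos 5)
Your proposal is correct and follows exactly the same route as the paper: apply Kahn's theorem to $\mathcal{H}_{\text{line-vertex}}$, noting that it is $k$-bounded, has maximum degree $D=\operatorname{mx}(\mathcal{A})$, and has codegree at most $1$, which satisfies $\operatorname{codeg}\leq\delta\cdot D$ once $\operatorname{mx}(\mathcal{A})\geq 1/\delta$. The only (cosmetic) difference is your $\lceil 1/\delta\rceil$ in place of the paper's $1/\delta$, which if anything is slightly cleaner since $\text{mx}_0$ is required to be an integer.
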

\begin{proof}
    Let $k, \varepsilon > 0$. Applying Theorem~\ref{thm:kahn_approximate_edge_coloring} provides a certain $\delta > 0$. Set $\text{mx}_0 := 1 / \delta$. 

    Now let $\mathcal{A}$ be a pseudoline arrangement that only contains crossings of degree at most $k$ and with $\operatorname{mx}(\mathcal{A}) \geq \text{mx}_0$. The maximum degree $D$ of~$\mathcal{H}_{\text{line-vertex}}$ equals $\operatorname{mx}(\mathcal{A})$. Moreover, the codegree of~$\mathcal{H}_{\text{line-vertex}}$ is exactly $1$, because two crossings in $\mathcal{A}$ can have at most one pseudoline in common, hence~$\operatorname{codeg}(\mathcal{H}_{\text{line-vertex}})=1\leq \delta \cdot D$. Finally,~$\mathcal{H}_{\text{line-vertex}}$ is $k$-bounded, as each crossing in~$\mathcal{A}$ is of degree at most $k$. Then, by the statement of Theorem~\ref{thm:kahn_approximate_edge_coloring}, the hypergraph~$\mathcal{H}_{\text{line-vertex}}$ can be list edge colored using~$(1+\varepsilon)\cdot \operatorname{mx}(\mathcal{A})$ colors. By the simple fact that list edge coloring is stronger than edge coloring, this implies the existence of the desired coloring of $\mathcal{A}$ using the same number of colors.
\end{proof}

\section{Coloring pseudolines}\label{sec:coloring_pseudolines}

A \emph{pseudoline coloring} of an arrangement~$\mathcal{A}$ is defined as a coloring of the pseudolines in~$\mathcal{A}$ such that there are no monochromatic crossings, i.e. crossings of pseudolines of a single color class. We let~$\chi_{pl}(\mathcal{A})$ denote the minimal number of colors in a pseudoline coloring of~$\mathcal{A}$.

\subsection{Pseudoline colorings and ordinary points}

The study of pseudoline colorings is closely related to the study of \emph{ordinary points}. An ordinary point is defined as a crossing of exactly two pseudolines, also known as \textit{simple crossing}. Every non-trivial pseudoline arrangement contains at least $\lceil 6n/13\rceil$ ordinary points~\cite{lechner88}. Two pseudolines that cross each other in an ordinary point must be assigned different colors. Hence, for simple arrangements $\mathcal{A}$ we have $\chi_{pl}(\mathcal{A})=n$.

In the following we want to take a closer look at the relationship between~$\chi_{pl}(\mathcal{A})$ and the structure of the ordinary points in a pseudoline arrangement. For this purpose, we define the \textit{ordinary graph}~$G_o(\mathcal{A})$ that has the~$n$ pseudolines of~$\mathcal{A}$ as its vertices and two of them share an edge if and only if they cross each other in an ordinary point. Clearly,~$\chi_{pl}(\mathcal{A}) \geq \chi(G_o(\mathcal{A}))$.

As a first natural question we ask how many ordinary points a pseudoline arrangement can have if it admits a pseudoline coloring with $k$ colors. Let~$\sigma_k(n)$ denote the maximum number of ordinary points of an arrangement~$\mathcal{A}$ of $n$ pseudolines with $\chi_{pl}(\mathcal{A})\leq k$.

\begin{proposition}
We have $\sigma_k(n)\in \Theta(n^2)$. More precisely, let~$t_{k}(n)$ denote the Turán number, i.e. the maximum number of edges that a graph on~$n$ vertices without containing a~$(k+1)$-clique can have. Then we have $t_{k}(n)-n\leq \sigma_{k}(n) \leq t_{k}(n)$.
\end{proposition}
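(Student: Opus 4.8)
The plan is to prove the two inequalities separately. For the upper bound $\sigma_k(n)\leq t_k(n)$, I would argue that the ordinary graph $G_o(\mathcal{A})$ of any arrangement $\mathcal{A}$ with $\chi_{pl}(\mathcal{A})\leq k$ cannot contain a $(k+1)$-clique: if pseudolines $l_1,\dots,l_{k+1}$ pairwise cross in ordinary points, then in any pseudoline coloring they must all receive distinct colors (each pair crosses in an ordinary point, so each pair must be bicolored), forcing at least $k+1$ colors, contradicting $\chi_{pl}(\mathcal{A})\leq k$. Since the number of ordinary points equals the number of edges of $G_o(\mathcal{A})$ (each ordinary point is a crossing of exactly two pseudolines, giving a well-defined edge, and two pseudolines cross only once so no double edges), the number of ordinary points is at most the maximum number of edges in a $(k+1)$-clique-free graph on $n$ vertices, which is $t_k(n)$ by definition of the Turán number.

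For the lower bound $\sigma_k(n)\geq t_k(n)-n$, the natural approach is to take the Turán graph $T_k(n)$ (the complete $k$-partite graph on $n$ vertices with balanced parts, which is the extremal $(k+1)$-clique-free graph with $t_k(n)$ edges) and realize it, or almost all of it, as the ordinary graph of a pseudoline arrangement with $\chi_{pl}\leq k$. The idea is to build an arrangement by grouping the $n$ pseudolines into $k$ classes matching the parts of $T_k(n)$, making all pseudolines within one class pass through a common point (so crossings inside a class are high-degree, not ordinary) while arranging that pseudolines from different classes cross in ordinary (simple) points. Coloring each class with one color then gives a valid pseudoline coloring with $k$ colors, and the ordinary points are exactly the inter-class crossings, of which there are $t_k(n)$ — minus a correction term of at most $n$ to account for the unavoidable constraint that one cannot literally have $k$ pencils of concurrent lines all of whose mutual crossings are simple; some crossings near where the pencils ``meet'' will have to be perturbed or will coincide, costing up to $n$ ordinary points. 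Concretely I would start from $k$ generic lines, replace line $i$ by a tight pencil of $n_i$ nearly-parallel lines through a point very close to where line $i$ used to be, and check that with care all crossings between different pencils are simple; the loss of ``at most $n$'' comes from bounding how many inter-pencil crossings get destroyed by the bundling.

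The main obstacle is the lower-bound construction: one must verify that $k$ pencils of concurrent lines can be placed in ``general position relative to each other'' so that every crossing between two different pencils is an ordinary point, and simultaneously nail down exactly why at most $n$ of the desired $t_k(n)$ ordinary points are lost. A clean way to handle the loss is to take the $i$-th pencil to consist of lines through a point $p_i$, with the $p_i$ themselves in general position, and with the $n_i$ slopes in pencil $i$ chosen generically; then a crossing of two lines from different pencils coincides with a crossing of two other such lines only in degenerate coincidences which generic choices avoid, and a crossing involving three lines from three distinct pencils is likewise avoidable — the only truly unavoidable non-ordinary crossings are the pencil points $p_i$ themselves and possibly a bounded number of forced coincidences, whence the $-n$ slack (one per pseudoline is more than enough). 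I would present this via a perturbation argument rather than explicit coordinates: fix the combinatorial structure (which pseudolines belong to which pencil), take a generic point in the corresponding realization space, and invoke that generic members are simple away from the pencil points.

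A secondary point worth stating carefully is that the lower bound must also respect $\chi_{pl}(\mathcal{A})\leq k$, not just the edge count of $G_o$: since coloring each pencil monochromatically yields no monochromatic crossing (every crossing is either a pencil point, incident to only one color, or an inter-pencil crossing, incident to two colors), we indeed get $\chi_{pl}(\mathcal{A})\leq k$, so the constructed arrangement is admissible and witnesses $\sigma_k(n)\geq t_k(n)-n$. Combining the two bounds gives $t_k(n)-n\leq \sigma_k(n)\leq t_k(n)$, and since $t_k(n)=\left(1-\tfrac1k\right)\tfrac{n^2}{2}+O(n)\in\Theta(n^2)$ for fixed $k\geq 2$, we conclude $\sigma_k(n)\in\Theta(n^2)$.
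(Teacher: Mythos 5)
Your upper bound is correct and is exactly the paper's argument: Tur\'an's theorem applied to the ordinary graph, plus the observation that a $(k+1)$-clique of pairwise ordinary crossings forces $k+1$ colors.

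The lower bound, however, has a genuine gap, and it is located precisely in the sentence where you claim that ``every crossing is either a pencil point, incident to only one color, or an inter-pencil crossing, incident to two colors'' and conclude there is no monochromatic crossing. Under the paper's definition, a \emph{monochromatic crossing} is any crossing all of whose pseudolines lie in a single color class, \emph{regardless of degree}; the pencil point $p_i$, through which all $s_i$ lines of color $i$ pass, is therefore exactly a forbidden monochromatic crossing, and your arrangement does not satisfy $\chi_{pl}(\mathcal{A})\leq k$. (That high-degree concurrences count is not a technicality: it is the entire point of Proposition~\ref{prop:gap}, where a degree-$3$ twist inside a strip forces two colors on that strip, and of the later results that explicitly \emph{relax} the definition to crossings of bounded degree.) Relatedly, your accounting of the $-n$ term is unmoored: with pencil apices and slopes chosen generically, \emph{no} inter-pencil crossings are lost, so your construction would ``prove'' $\sigma_k(n)=t_k(n)$ --- a sign that something is off. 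In the paper's construction the $-n$ is not perturbation slack but the exact price of repairing the coloring: each bundle is twisted so that its degree-$s_i$ concurrence point lies \emph{on a pseudoline of a different class}, which makes that crossing bichromatic (hence legal with one color per bundle) but absorbs the $s_i$ would-be ordinary crossings of that foreign pseudoline with the bundle; summed over the bundles this destroys $\sum_i s_i = n$ ordinary points, whence $t_k(n)-n$. Your proof can be repaired by adding this one modification, but as written the constructed arrangement is not a valid witness.
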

\begin{proof}
    For the upper bound, suppose that an arrangement of pseudolines~$\mathcal{A}$ has more than~$t_{k}(n)$ ordinary points, so~$G_o(\mathcal{A})$ has more than~$t_{k}(n)$ edges. By Turán's theorem,~$G_o(\mathcal{A})$ contains a~$(k+1)$-clique, so the corresponding pseudolines~$p_1, \cdots, p_{k+1}$ pairwise cross in ordinary points, and hence, must be colored using at least~$k+1$ different colours.

    For the lower bound, take any simple arrangement of~$k$ pseudolines and replace each of them by a strip of~$\lfloor n/k \rfloor$ or~$\lceil n/k \rceil$ parallel pseudolines, so that one obtains an ``arrangement'' of~$n$ curves in total. So far, the ordinary graph of this ``arrangement'' is identical to the Turán graph having~$t_k(n)$ edges. In order to make it a proper pseudoline arrangement, one has to twist each of the strips, as illustrated in Figure~\ref{fig:twisted_bundles} on an example with~$k=4$ and~$n=14$. This reduces the number of ordinary points by~$n$. The so obtained arrangement~$\mathcal{A}$ clearly fulfills~$\chi_{pl}(\mathcal{A})\leq k$.
\end{proof}

\begin{figure}
    \centering
    \includegraphics{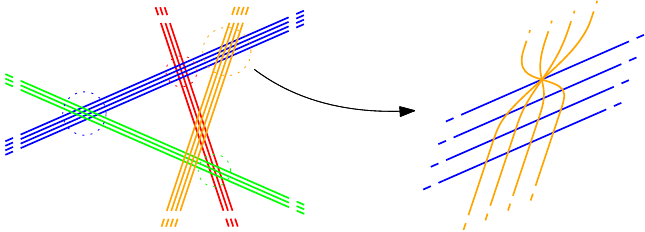}
    \caption{Constructing an arrangement with $t_k(n)-n$ ordinary points.}
    \label{fig:twisted_bundles}
\end{figure}

We would like to know how much $\chi_{pl}(\mathcal{A})$ and $\chi(G_o(\mathcal{A}))$ can differ and make the folloring observation:

\begin{proposition}\label{prop:gap}
    There are arbitrarily large arrangements with $\chi_{pl}(\mathcal{A})=2\cdot\chi(G_o(\mathcal{A}))$.
\end{proposition}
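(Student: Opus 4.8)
The plan is to construct, for arbitrarily large $n$, a pseudoline arrangement $\mathcal{A}$ whose ordinary graph $G_o(\mathcal{A})$ is bipartite (so $\chi(G_o(\mathcal{A}))=2$) but which nevertheless requires four colors for a pseudoline coloring. More generally, I would aim for a gadget whose ordinary graph has chromatic number $q$ while $\chi_{pl}=2q$, and then take disjoint unions or rescaled copies to get arbitrarily large examples; but the cleanest route is the case $q=2$. The key idea is that ordinary points only force a \emph{lower} bound $\chi_{pl}(\mathcal{A})\ge\chi(G_o(\mathcal{A}))$, and the gap comes from higher-degree crossings: if three pseudolines meet in a triple point, no two of them individually need different colors, yet they cannot all three receive the same color. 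So the construction should combine a bipartite pattern of ordinary points with a carefully placed system of triple (or higher) points that rules out every $2$- or $3$-coloring consistent with the bipartition.

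Concretely, first I would exhibit a small "base" arrangement realizing $\chi_{pl}=4$ and $\chi(G_o)=2$. One natural candidate: take four "bundles" of parallel-ish pseudolines arranged so that the ordinary graph between bundles is a 4-cycle (hence bipartite), while additional multiple points force that the two bundles on each side of the bipartition cannot share a single color — e.g., by installing a triple point using one pseudoline from each of three of the bundles in a way that, together with the ordinary-point constraints, collapses the available colors. I would verify feasibility by a direct case analysis on a $2$- or $3$-coloring: the bipartition of $G_o$ forces the color classes to respect the two sides, and the extra multiple points then yield a contradiction, while an explicit $4$-coloring is easy to display (and would be drawn in a figure analogous to Figure~\ref{fig:twisted_bundles}). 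Second, to get \emph{arbitrarily large} arrangements I would take $m$ disjoint "far apart" copies — but since pseudolines are required to pairwise cross, I instead replace each pseudoline of the base gadget by a twisted bundle of many parallel pseudolines (exactly the twisting trick from the previous proposition), which multiplies $n$ without creating new ordinary points within a bundle and without raising $\chi_{pl}$ beyond $4$; one must check that the twisting does not destroy the critical multiple points, which can be arranged by twisting away from those crossings.

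The main obstacle I expect is \textbf{certifying the lower bound $\chi_{pl}\ge 4$ rigorously and cleanly}: it is easy to draw a suggestive picture, but one must pin down exactly which crossings are ordinary (so the bipartition claim is correct) and exactly which higher-degree crossings occur, and then argue that every $3$-coloring fails. The delicate point is that a $3$-coloring is \emph{not} forced to use the bipartition of $G_o$ in the obvious way once more than $2$ colors are allowed, so the gadget must be rigid enough — likely by adding enough ordinary points so that $G_o$ is not merely bipartite but, say, contains a long even cycle or a connected bipartite subgraph whose proper colorings are essentially unique up to swapping the two sides. Getting the combinatorics of the wiring diagram to simultaneously (i) keep $G_o$ bipartite, (ii) make its $2$-colorings rigid, and (iii) host the color-collapsing multiple points is the crux; once a valid small gadget is found, the scaling step is routine.
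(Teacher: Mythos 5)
There is a genuine gap here: the whole argument rests on a ``base gadget'' with $\chi(G_o)=2$ and $\chi_{pl}=4$ that you never actually construct, and you yourself flag the lower bound $\chi_{pl}\ge 4$ as an unresolved crux. The specific candidate you sketch (four bundles whose ordinary graph is a $4$-cycle, plus a triple point through three of the bundles) also has an unaddressed realizability problem: in a pseudoline arrangement the two \emph{non-adjacent} bundle pairs must still cross pairwise, and to keep those crossings out of $G_o$ every single one of them must be absorbed into a crossing of degree at least $3$; you would have to exhibit a wiring diagram achieving this and then carry out the case analysis excluding all $3$-colorings. None of that is done, so as written this is a plan rather than a proof.

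The irony is that the ``twisted bundle'' trick you invoke only for the scaling step is already the entire proof, and it is exactly what the paper does. Take a simple arrangement of $r$ pseudolines and replace each one by a strip of $3$ pseudolines twisted through a common degree-$3$ crossing. Within a strip there are no ordinary points (the only internal crossing has degree $3$), while any two pseudolines from different strips meet in an ordinary point; hence $G_o=K_{3,\ldots,3}$ and $\chi(G_o)=r$. Each strip needs at least $2$ colors (its triple point must not be monochromatic), and the ordinary points between strips force the color sets of distinct strips to be disjoint, so $\chi_{pl}=2r$. Letting $r\to\infty$ gives arbitrarily large examples. Alternatively, fixing $r=2$ and letting the strip size grow --- i.e.\ your own blow-up step applied to just two twisted strips --- gives $G_o=K_{m,m}$ and $\chi_{pl}=4=2\cdot\chi(G_o)$, so no auxiliary $C_4$ gadget or cross-bundle triple points are needed at all.
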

\begin{proof}
    Start with a simple arrangement of~$r$ pseudolines. Replace each pseudoline by a strip of~$3$ pseudolines, each of them being twisted in an additional crossing of degree~$3$. The ordinary graph of the so obtained arrangement~$\mathcal{A}$ of~$3r$ pseudolines is the complete~$r$-partite graph~$K_{3,\cdots, 3}$, which can be~$r$-colored. However, for a pseudoline coloring, one requires~$2$ distinct colors for each of the~$r$ strips, so~$\chi_{pl}(\mathcal{A}) = 2r$. Figure~\ref{fig:three_bundes} gives an example.
\end{proof}

\begin{figure}
    \centering
    \includegraphics{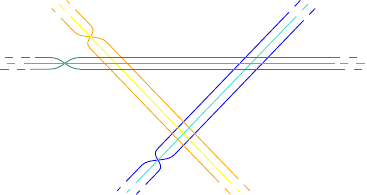}
    \caption{Arrangement $\mathcal{A}$ with $\chi_{pl}(\mathcal{A}) = 6$ and $\chi(G_o(\mathcal{A}))=3$.}
    \label{fig:three_bundes}
\end{figure}

It is unknown to us whether the factor of $2$ in Proposition~\ref{prop:gap} can be further improved.

\vspace{0.1cm}Next, we want to analyse the computatiponal complexity of~$\chi_{pl}(\mathcal{A})$. The following lemma serves as an NP-hardness reduction.

\begin{lemma}\label{lemma:reduction}
    For every graph $G$ on $n$ vertices and $m$ edges, there exists an arrangement~$\mathcal{A}_G$ of $n + \binom{n}{2} - m+1$ pseudolines with $\chi_{pl}(\mathcal{A}_G) \in \{\chi(G) + 1, \chi(G) + 2\}$ that can be computed efficiently.
\end{lemma}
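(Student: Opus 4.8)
The plan is to realize an arbitrary graph $G$ as (essentially) the ordinary graph of a pseudoline arrangement, so that a pseudoline coloring of $\mathcal{A}_G$ is forced to behave like a proper vertex coloring of $G$. First I would introduce one pseudoline $p_v$ for each vertex $v\in V(G)$; there are $n$ of these. The goal is to arrange that $p_u$ and $p_v$ cross in an ordinary point precisely when $uv\in E(G)$, and that every non-edge pair $p_up_v$ crosses only inside a high-degree crossing, so that the non-edges impose no color constraint. Since $n$ pseudolines in general position have $\binom{n}{2}$ crossings but we only want $m$ of them to be ordinary, I would "bundle" the $\binom{n}{2}-m$ non-edge crossings: route the pseudolines through a wiring-diagram-style construction in which, for each non-edge $uv$, the crossing of $p_u$ and $p_v$ is merged into a crossing of degree~$\geq 3$ together with some auxiliary pseudoline. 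The cleanest way to do this is to use one extra ``helper'' pseudoline $h$ and, at the location where $p_u$ and $p_v$ would cross, make $p_u,p_v,h$ all pass through a common point; this is why the count is $n+\binom{n}{2}-m$ ordinary-crossing-killers plus a single $+1$ for $h$. (One checks by a wiring-diagram argument, using that a curve family with the right left-to-right crossing sequence can always be drawn, that such an arrangement exists; Lemma~\ref{lemma:pseudoline_face_incidence} and Lemma~\ref{lemma:acyclic} are the standard tools for verifying realizability.)

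Next I would analyze $\chi_{pl}(\mathcal{A}_G)$. On one hand, the ordinary graph $G_o(\mathcal{A}_G)$ restricted to $\{p_v : v\in V(G)\}$ is exactly $G$ by construction, and the helper pseudoline $h$ meets the others only in degree-$\geq 3$ crossings, so $h$ is isolated in $G_o(\mathcal{A}_G)$. Hence $\chi_{pl}(\mathcal{A}_G)\geq \chi(G_o(\mathcal{A}_G)) = \chi(G)$, and in fact we need a little more: since $h$ is involved only in crossings of degree $\geq 3$, a color class containing $h$ must not be a full monochromatic crossing, but $h$ can safely reuse a color already used on some $p_v$ as long as that crossing has degree $\geq 3$ — however, if $h$ passes through a crossing together with $p_u$ and $p_v$ and both $p_u,p_v$ happen to share $h$'s color, that crossing becomes monochromatic. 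This is the source of the $+1$ versus $+2$ ambiguity: giving $h$ its own brand-new color always works and costs one extra color, so $\chi_{pl}(\mathcal{A}_G)\leq \chi(G)+1$; but whether $\chi(G)$ colors alone can be re-used for $h$ (avoiding monochromatic triple points) depends on the instance, so the best clean two-sided bound is $\chi_{pl}(\mathcal{A}_G)\in\{\chi(G)+1,\chi(G)+2\}$ once one also accounts for the possibility that $\chi(G)$ colors do not suffice for the $p_v$'s alone because of an unavoidable clash with $h$ at one triple point.

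For the lower bound $\chi_{pl}(\mathcal{A}_G)\geq\chi(G)+1$ I would argue that $h$ cannot share a color with any $p_v$: route $h$ so that it forms a triple point with $p_u,p_v$ for \emph{every} pair $uv$ that is a non-edge incident to some fixed structure — concretely, design the construction so that $h$ crosses \emph{every} $p_v$ in a crossing of degree exactly $3$ whose third pseudoline is some other $p_w$; then if $h$ and $p_v$ received the same color, we would need $p_w$ to differ, which is automatic, so that alone does not force a new color. To genuinely force $\chi(G)+1$ I would instead add a small gadget (a bundle twisted as in Figure~\ref{fig:twisted_bundles}) forcing $h$ to be adjacent in $G_o$ to two pseudolines that are themselves forced equal-colored in some optimal coloring of the rest — this is the delicate part. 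The main obstacle I expect is exactly this: controlling the realizability of the desired crossing pattern as a genuine pseudoline arrangement (no pair crosses twice) \emph{while} simultaneously pinning down $\chi_{pl}$ to within the claimed two values. I would handle realizability via an explicit wiring diagram — list, for each pseudoline, the left-to-right sequence of crossings it participates in, check that each pair appears in exactly one common crossing, and invoke the standard fact that any such consistent sequence is realizable — and handle the coloring bounds by the two elementary inequalities above, absorbing the unavoidable off-by-one into the set $\{\chi(G)+1,\chi(G)+2\}$. Finally, since $G\mapsto\mathcal{A}_G$ is clearly polynomial-time computable (the wiring diagram has $O(n^2)$ crossings), this yields the stated reduction.
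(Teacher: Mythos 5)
Your construction has the right skeleton --- one pseudoline per vertex of $G$, with each non-edge crossing absorbed into a crossing of degree $\ge 3$ by an auxiliary pseudoline, which is exactly what the paper does (it takes an $x$-monotone simple arrangement $l_1,\dots,l_n$ with distinct crossing abscissae and, for each non-edge $\{i,j\}$, adds a vertical pseudoline $L_{i,j}$ through the crossing of $l_i$ and $l_j$, all the $L_{i,j}$ concurrent at a point $p$ far above; this also disposes of realizability without any wiring-diagram argument). But two things go wrong. First, a detail: a \emph{single} helper pseudoline cannot pass through the would-be crossing of $p_u$ and $p_v$ for every non-edge $uv$, since it would then cross $p_u$ once per non-edge at $u$; you need one auxiliary pseudoline per non-edge, which is what the count $\binom{n}{2}-m$ in the statement is for. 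Your arithmetic happens to match, but your description of the role of $h$ conflates the $\binom{n}{2}-m$ killers with the single ``$+1$'' pseudoline.

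Second, and more seriously, the lower bound $\chi_{pl}(\mathcal{A}_G)\ge\chi(G)+1$ is exactly the point of the lemma, and your proposal has no working argument for it --- you note yourself that a helper meeting the $p_v$ only in degree-$3$ crossings forces nothing, and the ``small gadget'' you fall back on is left unspecified and, as described (forcing $h$ to be adjacent in $G_o$ to two pseudolines that are ``forced equal-colored in some optimal coloring''), does not obviously exist or yield the bound. The missing idea is the paper's extra pseudoline $L^*$: it crosses \emph{every} $l_i$ in an \emph{ordinary} point (and meets the $L_{i,j}$ only at the common point $p$). Any pseudoline coloring of $\mathcal{A}_G$ restricts to a proper coloring of $G$ on $l_1,\dots,l_n$; if only $\chi(G)$ colors were used in total, then by minimality of $\chi(G)$ all of them must already appear among $l_1,\dots,l_n$, and $L^*$ --- needing a color distinct from each of these --- has nowhere to go. This one observation replaces your entire ``delicate part.'' The upper bound $\chi(G)+2$ then comes from giving all the $L_{i,j}$ one fresh color and $L^*$ a second fresh color (note that the concurrency point $p$ prevents the $L_{i,j}$ from simply sharing $L^*$'s color).
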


\begin{proof}
    Let $G=(V, E)$ be a graph. Assume $V=[n]$ and let $m=\lvert E\rvert$. Consider an arbitrary simple arrangement~$\mathcal{A}$ of~$n$ pseudolines in which the pseudolines are drawn as~$x$-monotone curves and no two crossings lie on a vertical line, see for example the brown pseudolines in Figure~\ref{fig:np_reduction}. Label these pseudolines by~$l_1, \cdots, l_n$.  For all~$1 \leq i<j\leq n$ with~$\{ i, j \}\notin E$ insert a pseudoline~$L_{i,j}$ (green) by drawing a vertical line going through the crossing between $l_i$ and $l_j$ and let all these new pseudolines $L_{i,j}$ cross each other in a common point $p$ lying above~$l_1, \cdots, l_n$. Insert one further pseudoline~$L^*$ (orange) which crosses~$l_1, \cdots, l_n$ in ordinary points and the pseudolines~$L_{i,j}$ in $p$. 

    It remains to show that the so obtained arrangement $\mathcal{A}_G$ on $n + \binom{n}{2} - m + 1$ pseudolines fulfills \mbox{$\chi(G) + 1 \leq \chi_{pl}(\mathcal{A}) \leq \chi(G) + 2$}. For the upper bound, given a proper $k$-coloring~$c$ of~$G$, observe that the following coloring is a valid pseudoline coloring of~$\mathcal{A}_G$: Assign color~$c(i)$ to~$l_i$, to each pseudoline~$L_{i,j}$ assign an extra color~$k+1$ and to pseudoline~$L^*$ a further extra color~$k+2$. For the lower bound, assume there exists a coloring~$c$ of~$\mathcal{A}_G$ using only~$\chi(G)$ colors. For each~$(i, j)\in E$, pseudolines~$l_i$ and~$l_j$ must have different colors, since they cross in an ordinary point. Therefore, by the minimality of~$\chi(G)$, all~$\chi(G)$ colors appear on~$l_1, \cdots, l_n$ at least once. As~$L^*$ crosses all of them in ordinary points, a further color is required, contradiction.
\end{proof}

\begin{figure}
    \centering
    \includegraphics{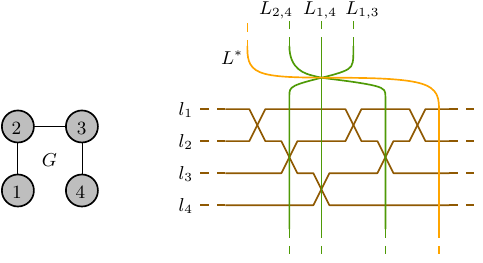}
    \caption{Reduction from approximating $\chi(G)$ to computing $\chi_{pl}(\mathcal{A})$.}
    \label{fig:np_reduction}
\end{figure}

\begin{proposition}
    Given an arrangement of pseudolines~$\mathcal{A}$, it is NP-hard to compute~$\chi_{pl}(\mathcal{A})$.
\end{proposition}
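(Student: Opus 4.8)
The plan is to reduce from the NP-hard problem of computing the chromatic number $\chi(G)$ of a graph, using the construction $\mathcal{A}_G$ from Lemma~\ref{lemma:reduction} essentially as a black box. Given a graph $G$ on $n$ vertices and $m$ edges, we build $\mathcal{A}_G$ in polynomial time and know that $\chi_{pl}(\mathcal{A}_G) \in \{\chi(G)+1, \chi(G)+2\}$. The immediate difficulty is that Lemma~\ref{lemma:reduction} only pins down $\chi_{pl}(\mathcal{A}_G)$ up to an additive ambiguity of one, so a single oracle call to $\chi_{pl}$ does not by itself recover $\chi(G)$. I would get around this in one of two standard ways.

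The cleaner way: argue that the reduction actually computes $\chi(G)$ exactly after a small fixed correction, by sharpening which of the two values occurs, or by padding. For instance, I would consider applying the construction not to $G$ but to $G$ together with an added universal vertex (a vertex adjacent to all of $V$), call it $G^+$; then $\chi(G^+) = \chi(G)+1$, and one can hope that the extra structure forces $\chi_{pl}(\mathcal{A}_{G^+})$ to land on a predictable one of the two candidate values, or at least that comparing the outcomes for $G$ and $G^+$ disambiguates. Alternatively, and most robustly, I would simply observe that the proposition only claims NP-\emph{hardness}, not that $\chi_{pl}$ is computable by a single subtraction: querying a hypothetical polynomial-time algorithm for $\chi_{pl}$ on $\mathcal{A}_G$ returns a value in a two-element set, one element of which is $\chi(G)+1$; so this gives a polynomial-time algorithm that, for every $k$, decides whether $\chi(G) \le k$ is \emph{possible} up to an error of one, which already suffices to approximate $\chi(G)$ within an additive constant. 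Since even approximating $\chi(G)$ to within \emph{any} constant factor (indeed to within $n^{1-\varepsilon}$) is NP-hard by the classical inapproximability results for graph coloring, an additive-one approximation of $\chi(G)$ is certainly NP-hard, and hence so is computing $\chi_{pl}(\mathcal{A})$. This is the argument I would write up, since it sidesteps the ambiguity entirely.

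Concretely, the steps are: (i) recall that computing $\chi(G)$, and even approximating it within a multiplicative factor $n^{1-\varepsilon}$, is NP-hard (cite the standard reference); (ii) given a putative polynomial-time algorithm $\mathcal{B}$ computing $\chi_{pl}$, define $\mathcal{B}'(G) := \mathcal{B}(\mathcal{A}_G) - 2$, which by Lemma~\ref{lemma:reduction} is efficiently computable and satisfies $\chi(G) - 1 \le \mathcal{B}'(G) \le \chi(G)$; (iii) observe that $\mathcal{B}'(G)+1$ is then a polynomial-time computable quantity lying in $\{\chi(G), \chi(G)+1\}$, hence within additive $1$ — a fortiori within multiplicative factor $2$ — of $\chi(G)$; (iv) conclude that such a $\mathcal{B}$ would yield a polynomial-time factor-$2$ approximation of $\chi(G)$, contradicting the NP-hardness of that task unless $\mathrm{P}=\mathrm{NP}$. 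The main obstacle is purely a matter of bookkeeping: making sure the cited inapproximability statement is strong enough (a factor-$2$, or even just additive-constant, hardness suffices and follows from e.g. Zuckerman's $n^{1-\varepsilon}$ result) and that the size of $\mathcal{A}_G$ — which is $n + \binom{n}{2} - m + 1 = O(n^2)$ — keeps the reduction polynomial, so that the inapproximability transfers without loss. No genuinely new combinatorial idea is needed beyond Lemma~\ref{lemma:reduction}; the proposition is a corollary of it.
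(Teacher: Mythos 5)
Your proposal is correct and follows essentially the same route as the paper: reduce via the arrangement $\mathcal{A}_G$ of Lemma~\ref{lemma:reduction} and invoke the $n^{1-\varepsilon}$-inapproximability of $\chi(G)$ from~\cite{zuckerman07}, noting that the additive ambiguity of one (or two) in $\chi_{pl}(\mathcal{A}_G)$ is harmless since it still yields an approximation of $\chi(G)$ far better than the hardness threshold. The paper states this more tersely, but the argument and the bookkeeping are the same as yours.
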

\begin{proof}
    In~\cite{zuckerman07} it was shown that for every $\varepsilon > 0$, given a graph $G$, approximating the chromatic number $\chi(G)$ within $n^{1-\varepsilon}$ is NP-hard. Via Lemma~\ref{lemma:reduction}, a polynomial time algorithm for computing $\chi_{pl}(\mathcal{A})$ would provide such an approximation of $\chi(G)$ in polynomial~time.
\end{proof}

\subsection{Avoiding monochromatic crossings of high degrees}

Even though $\chi_{pl}(\mathcal{A})$ and $\chi(G_o(\mathcal{A}))$ can differ by a multiplicative factor, as stated in Proposition~\ref{prop:gap}, for most arrangements, $\chi_{pl}(\mathcal{A})$ does not seem to be far away from $\chi(G_o(\mathcal{A}))$. This is why our focus lies now on a variant of pseudoline colorings: Instead of avoiding monochromatic crossings of any degree, including ordinary points, we only forbid crossings of certain degrees to be monochromatic.

\begin{lemma}\label{lemma:pseudoline_coloring_degree_interval}
    Let $l,r\in\mathbb{N}$ and let $\mathcal{A}$ be an arrangement of $n$ pseudolines. Then, using \[\left(\frac{4(l+r)}{l-1}n\right)^{\frac{1}{l-1}}\in\mathcal{O}(n^{\frac{1}{l-1}})\] colors, $\mathcal{A}$ can be colored avoiding monochromatic crossings of degree within $\{l, l+1, \cdots, l+r\}$.
\end{lemma}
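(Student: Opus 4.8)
The plan is to color the pseudolines uniformly at random and invoke the symmetric Lovász Local Lemma. Assume $l \ge 2$ (otherwise the claimed bound is not even well-defined). Set $k := \left\lceil \left(\frac{4(l+r)}{l-1}n\right)^{1/(l-1)} \right\rceil$ and assign to each of the $n$ pseudolines independently a color drawn uniformly from $\{1,\dots,k\}$. For each crossing $c$ whose degree $d=\deg(c)$ lies in $\{l,\dots,l+r\}$, let $A_c$ be the event that all $d$ pseudolines through $c$ receive the same color; then $\Pr[A_c]=k^{1-d}\le k^{1-l}=:p$. Crossings of degree outside $\{l,\dots,l+r\}$ are allowed to be monochromatic, so the $A_c$ are the only bad events. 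Since $A_c$ depends only on the colors of the pseudolines through $c$, it is mutually independent of the family of all $A_{c'}$ whose crossing $c'$ shares no pseudoline with $c$.

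The heart of the argument is the dependency estimate, where the naive bound is too weak. The key observation is that every pseudoline $p$ carries at most $\frac{n-1}{l-1}$ crossings of degree at least $l$: a crossing on $p$ of degree $\delta$ accounts for $\delta-1$ of the $n-1$ pseudolines meeting $p$, so $\sum_{c'\in p}(\deg(c')-1)=n-1$, and each crossing of degree $\ge l$ contributes at least $l-1$ to this sum. Hence a bad event $A_c$ with $\deg(c)=d$ is adjacent in the dependency graph to at most $d\left(\frac{n-1}{l-1}-1\right)$ other bad events, so the dependency degree $D$ satisfies $D+1\le d\cdot\frac{n-1}{l-1}\le\frac{(l+r)n}{l-1}$.

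Finally I would check the condition $e\,p\,(D+1)\le 1$ of the symmetric Lovász Local Lemma. By the choice of $k$ we have $k^{l-1}\ge\frac{4(l+r)}{l-1}n>e\cdot\frac{(l+r)}{l-1}n$, so $p=k^{1-l}<\frac{l-1}{e(l+r)n}$, and therefore $e\,p\,(D+1)<e\cdot\frac{l-1}{e(l+r)n}\cdot\frac{(l+r)n}{l-1}=1$. Thus with positive probability none of the events $A_c$ occurs, and any such outcome is a coloring of $\mathcal{A}$ using $k\in\mathcal{O}(n^{1/(l-1)})$ colors with no monochromatic crossing of degree in $\{l,\dots,l+r\}$.

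The main obstacle is precisely the refined dependency bound: using only the fact that a pseudoline meets $n-1$ crossings gives dependency degree $\approx(l+r)n$, which already violates the Local Lemma inequality for $l=3$; the identity $\sum_{c'\in p}(\deg(c')-1)=n-1$ is exactly what supplies the missing factor $\frac{1}{l-1}$ needed to match the stated bound. I would also briefly dispatch the trivial edge cases (e.g.\ no crossings of degree $\ge l$, or $l$ large compared to $n$), for which the statement holds automatically.
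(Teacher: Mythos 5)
Your proposal is correct and follows essentially the same route as the paper: a uniformly random coloring of the pseudolines, with the Lovász Local Lemma applied to the monochromatic-crossing events and the dependency degree bounded via the observation that each pseudoline carries at most about $\frac{n}{l-1}$ crossings of degree at least $l$. The only cosmetic differences are that you verify the symmetric condition $e\,p\,(D+1)\le 1$ while the paper uses the equivalent $4pd\le 1$ form, and you spell out the counting identity $\sum_{c'\in p}(\deg(c')-1)=n-1$ that the paper uses implicitly.
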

\begin{proof}
    The case $l=2$ is trivial, so assume $l \geq 3$. For some integer $k$, choose for each pseudoline one out of $k$ colors randomly with uniform distribution. For each crossing $c$ of degree $\operatorname{deg}(c)\in\{l, l+1, \cdots, l+r\}$, consider the event $E(c)$ that $c$ is monochromatic. We have \[\mathbb{P}[E(c)]=\frac{1}{k^{\operatorname{deg}(c) - 1}}\leq \frac{1}{k^{l - 1}} =: p.\] Moreover, $E(c)$ is mutually independent of the set of events $E(c')$ for all crossings $c'$ in $\mathcal{A}$ that do not have a common pseudoline with $c$. These are all but at most \[\operatorname{deg(c)}\frac{n}{l-1}\leq \frac{l+r}{l-1}n =: d\] many. Applying Lovász Local Lemma (see~\cite{molloyReed02}) on the family of events $\{E(c)\}$, guarantees that with non-zero probability none of these events occur if $4pd\leq 1$, which is the case when \[k\geq \left(\frac{4(l+r)}{l-1}n\right)^{\frac{1}{l-1}}.\] The statement follows.
\end{proof}

Using Lemma~\ref{lemma:pseudoline_coloring_degree_interval} we are now able to prove Theorem~\ref{thm:pseudoline_coloring_degree_at_least_four}.

\begin{proof}[Proof of Theorem~\ref{thm:pseudoline_coloring_degree_at_least_four}]
    Assume that all crossings have degree at most $\sqrt{n}$. In this case, applying Lemma~\ref{lemma:pseudoline_coloring_degree_interval} with~$l=4$ and~$r=\sqrt{n}-4$ gives that \[\left(\frac{4\sqrt{n}}{3}n\right)^{\frac{1}{3}}\in\mathcal{O}(\sqrt{n})\] colors are sufficient in order to avoid monochromatic crossings of degree at least~$4$.

    Now, if~$\mathcal{A}$ contains crossings of degree higher than~$\sqrt{n}$, eliminate such a crossing by dropping the corresponding bundle of pseudolines from the arrangement. Repeat this step until all crossings have degree at most~$\sqrt{n}$. One requires less than~$\sqrt{n}$ iterations as each iteration removes more than~$\sqrt{n}$ pseudolines. By the argument above, the remaining arrangement~$\mathcal{A'}$ can be colored avoiding monochromatic crossings of degree at least $4$. When reinserting each of the at most~$\sqrt{n}$ removed bundles of pseudolines, one needs at most $2$ extra colors, in total~$2\sqrt{n}$ extra colors.
\end{proof}

If we only want to avoid monochromatic crossings of a single degree, then we can obtain a stronger result by applying again a result from the hypergraph coloring literature. The following theorem is due to Frieze and Mubayi \cite{FriezeMubayi13}. Its proof is also based on the probabilistic method, but in addition to Lovász Local Lemma it uses a rather involved analysis including the Chernoff bound. A hypergraph is \textit{$k$-uniform} if all its hyperedges have cardinality $k$.

\begin{theorem}[Frieze \& Mubayi, 2013]\label{thm:vertex_coloring_simple_hypergraphs}
    Fix $k\geq 3$. Let $\mathcal{H}$ be a $k$-uniform simple hypergraph with maximum degree $\triangle$. Then \[\chi(\mathcal{H}) \leq c\left( \frac{\triangle}{\log \triangle}\right)^{\frac{1}{k-1}},\] where $c$ only depends on $k$.
\end{theorem}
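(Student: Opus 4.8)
The plan is to argue by the probabilistic method, in the form of a semi-random (``nibble'') coloring. Start from the easy bound $\chi(\mathcal{H}) = O\big((k\Delta)^{1/(k-1)}\big)$: color every vertex independently and uniformly with one of $t$ colors, so that a fixed hyperedge $e$ is monochromatic with probability exactly $t^{1-k}$, while the event ``$e$ monochromatic'' depends only on the corresponding events of the at most $k\Delta$ hyperedges that meet $e$; the Lov\'asz Local Lemma then closes the argument as soon as $t \gtrsim (k\Delta)^{1/(k-1)}$. This one-shot coloring uses nothing about simplicity, and the whole content of the theorem is the extra factor $(\log\Delta)^{1/(k-1)}$; gaining it is precisely where the hypothesis that two hyperedges meet in at most one vertex becomes essential, and the gain is the hypergraph analogue of Johansson-type bounds $\chi(G) = O(\Delta/\log\Delta)$ for locally sparse graphs.

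Concretely, fix $t = c_k\,(\Delta/\log\Delta)^{1/(k-1)}$ and run a semi-random coloring: each vertex activates with a suitable small probability and, if activated, is given a uniformly random color among the $t$ available (or among its current list), which is made permanent unless it would complete a monochromatic hyperedge. For an uncolored vertex $v$ and a color $c$ still available at $v$, let $d_{v,c}$ be the number of hyperedges through $v$ that could still become monochromatic in color $c$, and let $D = \max_{v,c} d_{v,c}$. The goal of a round is to reduce $D$ substantially while keeping the number of colors available at each vertex above $C\,(D'/\log D')^{1/(k-1)}$, where $D'$ is the new maximum. Iterating --- with the same pool of colors, or recursing on the uncolored remainder with fresh color sets whose sizes decrease geometrically --- brings the maximum degree below an absolute constant, at which point the one-shot Local Lemma bound above finishes the coloring; summing the colors used leaves the total at $O\big((\Delta/\log\Delta)^{1/(k-1)}\big)$.

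The technical heart, and the step I expect to be the main obstacle, is proving that one round achieves the claimed reduction for \emph{every} vertex and color simultaneously, with positive probability. For fixed $v$ and $c$ the new value of $d_{v,c}$ is a sum of indicators, over hyperedges $e \ni v$, of the event that $e$ is still ``threatening in color $c$'' after the round; one computes its expectation and needs Chernoff- or Talagrand-type concentration for its lower tail, and similarly one must bound how many colors are lost at $v$. Simplicity enters twice and decisively here: since two hyperedges through $v$ meet only in $v$, the corresponding indicators depend on disjoint sets of other vertices and are mutually independent, which is exactly what keeps the variance small enough for the concentration estimates; and simplicity keeps the set of vertices whose choices influence the event ``the invariant fails at $v$'' of size $O(k^2\Delta^2)$, so that once these failure events are shown to be sufficiently unlikely a single application of the Local Lemma per round certifies that none of them occurs. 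The delicate part is to tune the activation probability, the per-round decay of $D$, and the choice of which quantities one concentrates so that the failure probabilities genuinely beat this polynomial dependency degree; carrying this through and iterating yields the stated bound.
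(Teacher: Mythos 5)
The paper does not actually prove this statement: Theorem~\ref{thm:vertex_coloring_simple_hypergraphs} is imported verbatim from Frieze and Mubayi and used as a black box (the text only remarks that their proof combines the Local Lemma with an involved Chernoff-type analysis). So there is no in-paper argument to measure you against, and the relevant question is whether your proposal stands on its own as a proof. It does not. What you have written is an accurate road map of the known semi-random (``nibble'') strategy --- and your two observations about where simplicity enters (mutual independence of the indicators attached to distinct edges through a fixed vertex, and polynomial control of the dependency degree for the per-round Local Lemma) are exactly the right ones --- but the entire quantitative content of the theorem sits inside the step you yourself flag as ``the main obstacle and expect to be'' handleable: the claim that a single round reduces $\max_{v,c} d_{v,c}$ by a definite factor while losing only a controlled number of colors at every vertex, simultaneously and with positive probability. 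Without specifying the activation probability, the invariant maintained across rounds, the expectation and concentration computation for the new $d_{v,c}$, and the verification that the failure probabilities beat the $O(k^2\Delta^2)$ dependency degree, nothing forces the final count to be $O\bigl((\Delta/\log\Delta)^{1/(k-1)}\bigr)$ rather than the trivial $O\bigl((k\Delta)^{1/(k-1)}\bigr)$ you start from; the logarithmic saving is precisely what the omitted estimates must produce.

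Two further points you would have to confront if you carried this out. First, the lower-tail concentration you invoke is delicate for $k=3$, where the sums of indicators have few summands of non-negligible variance; this is the case Frieze and Mubayi treat with the most care, and a sketch that does not distinguish $k=3$ from $k\geq 4$ is glossing over the hardest regime. Second, your accounting of ``colors lost at $v$'' needs a precise definition: in a hypergraph a color is only forbidden at $v$ by an edge $e\ni v$ once the other $k-1$ vertices of $e$ all carry that color, so the relevant quantity to track is $d_{v,c}$ (edges that are close to monochromatic in $c$), not a graph-style palette loss, and the recursion on fresh, geometrically shrinking color sets must be set up so that the degree parameter and the palette size fall in lockstep. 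These are fixable, but they are the proof, not details of it.
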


\begin{proposition}\label{prop:pseudoline_coloring}
    Fix some~$l\geq 3$ and let~$\mathcal{A}$ be an arrangement of~$n$ pseudolines. Then, the pseudolines in~$\mathcal{A}$ can be colored using~\[c\cdot\left(\frac{\operatorname{mx}(\mathcal{A})}{\log \operatorname{mx}(\mathcal{A})}\right)^{\frac{1}{l-1}}\in\mathcal{O}\left(\left(\frac{n}{\log n}\right)^{\frac{1}{l-1}}\right)\] colors avoiding monochromatic crossings of degree exactly
    $l$, where~$c$ only depends on~$l$.
\end{proposition}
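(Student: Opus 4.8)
The plan is to realize the desired coloring as a \emph{vertex} coloring of an auxiliary hypergraph and then invoke Theorem~\ref{thm:vertex_coloring_simple_hypergraphs}. Concretely, fix $l\ge 3$ and define the hypergraph $\mathcal{H}=\mathcal{H}_l(\mathcal{A})$ whose vertex set is the set of $n$ pseudolines of $\mathcal{A}$ and whose hyperedges are exactly the crossings of degree exactly $l$, a crossing $c$ being identified with the set of $l$ pseudolines passing through it. By construction $\mathcal{H}$ is $l$-uniform, and a vertex coloring of $\mathcal{H}$ — a coloring of the pseudolines with no monochromatic hyperedge — is precisely a coloring of $\mathcal{A}$ avoiding monochromatic crossings of degree exactly $l$.

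Next I would verify that $\mathcal{H}$ satisfies the hypotheses of Theorem~\ref{thm:vertex_coloring_simple_hypergraphs}. Since $l\ge 3$, every hyperedge has cardinality at least $2$. Two distinct crossings of $\mathcal{A}$ have at most one pseudoline in common, for otherwise two pseudolines would cross twice; hence any two distinct hyperedges of $\mathcal{H}$ meet in at most one vertex (and, in particular, $\mathcal{H}$ has no repeated hyperedges), so $\mathcal{H}$ is simple in the sense used by the theorem. For the maximum degree: the degree of a pseudoline $p$ in $\mathcal{H}$ is the number of degree-$l$ crossings lying on $p$, which is at most the total number of crossings on $p$, hence at most $\operatorname{mx}(\mathcal{A})$. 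Thus $\triangle(\mathcal{H})\le\operatorname{mx}(\mathcal{A})$.

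Applying Theorem~\ref{thm:vertex_coloring_simple_hypergraphs} with $k=l$ now yields a coloring of $\mathcal{A}$ avoiding monochromatic crossings of degree exactly $l$ using at most $c\bigl(\triangle(\mathcal{H})/\log\triangle(\mathcal{H})\bigr)^{1/(l-1)}$ colors, where $c$ depends only on $l$. Since $x\mapsto x/\log x$ is increasing for $x\ge 3$ and $\triangle(\mathcal{H})\le\operatorname{mx}(\mathcal{A})$, this is at most $c\bigl(\operatorname{mx}(\mathcal{A})/\log\operatorname{mx}(\mathcal{A})\bigr)^{1/(l-1)}$. Finally $\operatorname{mx}(\mathcal{A})\le n-1$, because each of the other $n-1$ pseudolines meets a given one in a single crossing, so one more application of monotonicity delivers the claimed $\mathcal{O}\bigl((n/\log n)^{1/(l-1)}\bigr)$ bound.

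The only real bookkeeping issue — not a genuine obstacle — is the degenerate regime where $\operatorname{mx}(\mathcal{A})$ (equivalently $\triangle(\mathcal{H})$, or $n$) lies below the threshold at which $x/\log x$ is positive and increasing, or where $\mathcal{H}$ has no hyperedges at all; in that range a number of colors depending only on $l$ trivially suffices, and this can be absorbed into the constant $c$. Everything else is a direct translation between the language of arrangements and that of hypergraphs.
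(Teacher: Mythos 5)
Your proof is correct and follows essentially the same route as the paper: restrict $\mathcal{H}_{\text{line-vertex}}$ to the hyperedges given by degree-$l$ crossings, observe the result is simple and $l$-uniform with maximum degree bounded by $\operatorname{mx}(\mathcal{A})$, and apply Theorem~\ref{thm:vertex_coloring_simple_hypergraphs}. Your version is in fact slightly more careful than the paper's (using $\triangle\leq\operatorname{mx}(\mathcal{A})$ rather than equality, and handling monotonicity and degenerate small cases explicitly), but the argument is the same.
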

\begin{proof}
    Recall that pseudoline coloring is equivalent to vertex coloring of $\mathcal{H}_{\text{line-vertex}}$. However, if we only aim for avoiding monochromatic crossings of degree $l$, then we only care about hyperedges of degree~$l$ and delete all other hyperedges. The resulting hypergraph is simple and $l$-uniform. Its maximum degree degree $\triangle$ equals~$\operatorname{mx}(\mathcal{A})$. The statement is hence a direct application of Theorem~\ref{thm:vertex_coloring_simple_hypergraphs}.
\end{proof}

\section{Conclusion and Future Work}

We consider Theorem~\ref{thm:crossing_coloring_face_respecting} as our main result. When coloring the crossings avoiding twice the same color along any pseudoline, Theorem~\ref{thm:crossing_coloring} is a direct application of the recently proven Erd{\H{o}}s-Faber-Lovász conjecture. However, for the specific hypergraphs induced by pseudoline arrangements, one could hope for a simple deterministic coloring procedure, like the one proposed in~\cite{changLawler88} that requires $\lceil(3/2)n-2\rceil$ colors.

We mentioned Conjecture~\ref{conj:crossing_coloring_dep_on_mx} as an open problem. One may also ask whether there always exists a coloring using~$n$ colors that satisfies the conditions of Theorem~\ref{thm:crossing_coloring_face_respecting} and Theorem~\ref{thm:crossing_coloring} simultaneously. Figure~\ref{fig:example_non_simultanously_colorable} shows an arrangement of $5$ pseudolines as a counterexample: coloring the crossings avoiding twice the same color around any cell and twice the same color along any peudoline requires at least $6$ colors. Are there arbitrarily large examples like that?

When it comes to pseudoline colorings, we asked whether~$\chi_{pl}(\mathcal{A})$ and~$\chi(G_o(\mathcal{A}))$ can differ by a factor larger than~$2$. Finally, in view of Lemma~\ref{lemma:pseudoline_coloring_degree_interval}, Theorem~\ref{thm:pseudoline_coloring_degree_at_least_four} and Proposition~\ref{prop:pseudoline_coloring}, we expect it to be possible to color the pseudolines of every arrangement using~$\mathcal{O}(n^{\frac{1}{l-1}})$ colors avoiding monochromatic crossings of degree at least~$l$.

\begin{figure}
    \centering
    \includegraphics{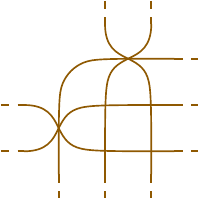}
    \caption{Arrangement of~$5$ pseudolines that cannot be simultaneously colored as in Theorem~\ref{thm:crossing_coloring_face_respecting} and Theorem~\ref{thm:crossing_coloring}.}
    \label{fig:example_non_simultanously_colorable}
\end{figure}

\bibliography{bibliography}

\end{document}